\newtheorem{theorem}{Theorem}[section]
\newtheorem{corollary}[theorem]{Corollary}
\newtheorem{proposition}[theorem]{Proposition}
\newtheorem{example}[theorem]{Example}
\newtheorem{lemma}[theorem]{Lemma}
\newtheorem{question}[theorem]{Question}
\def\qed{\vbox{\hrule
 \hbox{\vrule\hbox to 5pt{\vbox to 8pt{\vfil}\hfil}\vrule}\hrule}}
\def\endproof{\unskip \nobreak \hskip0pt plus 1fill \qquad \qed \par}
\newcommand{\mb}{\mathbb}
\newcommand{\mc}{\mathcal}
\begin{document}

\title{Permutation  Matrices, Their Discrete Derivatives and Extremal Properties}

\author{
 Richard A. Brualdi\footnote{Department of Mathematics, University of Wisconsin, Madison, WI 53706, USA. {\tt brualdi@math.wisc.edu}} \\
 Geir Dahl\footnote{Department of Mathematics,  
 University of Oslo, Norway.
 {\tt geird@math.uio.no.} Corresponding author.}
 }

\maketitle

\begin{center}
   {\em Dedicated to Volker Mehrmann with admiration and respect.}
\end{center}

\begin{abstract} 
  For a permutation $\pi$, and the corresponding permutation matrix, we introduce the notion of {\em discrete derivative}, obtained by taking differences of successive entries in $\pi$. We characterize the possible derivatives of permutations, and consider questions for permutations with certain properties satisfied by the derivative. For instance, we consider  permutations with distinct derivatives, and the relationship to  so-called Costas arrays.
 
 \end{abstract}


\noindent {\bf Key words.} Permutation matrix, discrete derivative, Costas array.

\noindent
{\bf AMS subject classifications.} 05B20, 15B48.

\section{Introduction}
 \label{sec:intro}
 Let $\pi=(\pi_1,\pi_2,\ldots,\pi_n)$ be a permutation of $\{1,2,\ldots,n\}$, a {\it permutation of order $n$}. The permutation $\pi$ can be given in the equivalent form as  an $n\times n$ permutation matrix $P_{\pi}$ with 1's in positions $(i,\pi_i)$ for $i=1,2,\ldots,n$ and 0's elsewhere. 
 Let $\mc{S}_n$ denote the set of all permutations of order $n$, and let $\mc{P}_n$ denote the corresponding set of $n\times n$ permutation matrices. 
We define the {\it discrete derivative } of $\pi$ and $P_{\pi}$ to be the vector
\[D(\pi)=D(P_{\pi}) = (\pi_2-\pi_1,\pi_3-\pi_2,\ldots,\pi_n-\pi_{n-1})\in \mb{Z}^{n-1}.\]
The integers $\pi_i-\pi_{i-1}$ are the {\it first-order differences} of $\pi$. For $k=1,2,\ldots,n-1$ we define the {\it $k$th order differences} of $\pi$ to be
integers $\pi_{i}-\pi_{i-k}$  for $i=k+1,\ldots,n$. Note that the $k$th order differences for $k\ge 2$ are sums of first-order differences:
\begin{equation}\label{eq:sums}\pi_i-\pi_{i-k}=(\pi_i-\pi_{i-1})+(\pi_{i-1}-\pi_{i-2})+\cdots+ (\pi_{i-k+1}-\pi_{i-k}).\end{equation}
If $k=0$, then the zeroth order differences are defined to be the entries of $\pi$ itself. We assemble all these differences in a triangle $\Theta_{\pi}$ that we call the {\it difference triangle} of $\pi$.
To illustrate, if $n=6$  and $\pi=(3,5,1,6,2,4)$, then
\begin{equation}\label{eq:difftable}
\Theta_{\pi}=\quad \begin{array}{rrrrrrrrrrr}
3&&5&&1&&6&&2&&4\\
&2&&-4&&5&&-4&&2&\\
&&-2&&1&&1&&-2&&\\
&&&3&&-3&&3&&&\\
&&&&-1&&-1&&&&\\
&&&&&1&&&&&
\end{array}.\end{equation}
We label the rows of the difference triangle of a permutation of order $n$ as $0,1,2,\ldots,$ $n-1$ so that they correspond to the order   of the differences in the rows.

The difference triangle $\Theta_{\pi}$ as defined above is different from what is usually called the  {\it difference table} of a finite sequence that is constructed by taking successive differences of entries in  rows. Rows 0 and 1 are the same but then they would differ. Thus, with $\pi=(3,5,1,6,2,4)$ as above, row 2 of the difference table is 
\[(-4)-2=-6\quad  5-(-4)=9\quad
 (-4)-5=-9\quad 2-(-4)=6,\]
 and differs from row 2 of the difference triangle as given in (\ref{eq:difftable}).

The notion of the derivative of a permutation captures the changes in consecutive entries of a permutation $\pi$, and therefore contains information about e.g. the descents of a permutation. We refer to the book \cite{Bona04} on permutations and their descents. Permutation matrices and more general classes of $(0,1)$-matrices are treated in \cite{BR91}. 

A {\it Costas permutation} (or {\it Costas array} or {\it Costas permutation matrix}) (\cite{Golomb}) is a permutation $\pi$ of order $n$ such that  for each $k=0,1,2,\ldots,n-1$, its  $k$th order differences in row $k$ of its difference triangle are distinct. Since $\pi$ is a permutation, its zeroth order differences are distinct; since $\pi$ has only one $(n-1)$st  difference, no restriction is placed on $(n-1)$st order differences. If we think of a permutation matrix  as a configuration of points in the Euclidean plane at the integral positions
$(i,\pi_i)$ for $i=1,\ldots,n$, then for a Costas permutation, no two of the  line segments determined by these points have both the same length and the same slope, and thus all the line segments they determine are distinct.
In terms of its difference triangle, the integers in each row are distinct. We remark that a {\it Golomb ruler} of order $n$  is defined to be a sequence of $n$ distinct positive integers such that {\it all} of the entries in its difference triangle are distinct. Since we confine our attention to  permutations of $\{1,2,\ldots,n\}$  a Golumb ruler is possible only in the trivial cases of  $n=1$ or $2$.
An example of a Costas permutation of order 4 is $\pi=(4,3,1,2)$ with difference triangle
\[\begin{array}{rrrrrrr}
4&&3&&1&&2\\
&-1&&-2&&1&\\
&&-3&&-1&&\\
&&&-2&&&\end{array}.\]
 If a symmetry of the dihedral group $D_4$ is applied to a Costas permutation, the result is also a Costas permutation. 
Other structural properties of Costas permutations are given in \cite{Jedwab}. In particular, the following holds:
\begin{proposition}\label{pr:Jed} {\rm (Proposition 4.2 in \cite{Jedwab})}.
If $n\ge 4$, then in a Costas permutation matrix  $P=[p_{ij}]$ of order $n$,
there exists $p_{rs}=1$ and $p_{uv}=1$ $((r,s)\ne (u,v))$ such that also
$p_{ab}=1$ and $p_{cd}=1$ where $b-d=s-v$ and $a-c=-(r-u)$, that is the line segment joining the points $(a,b)$ and $(c,d)$ has the same vertical displacement and opposite horizontal displacement as the line segment joining the points $(r,s)$ and $(u,v)$.
\end{proposition}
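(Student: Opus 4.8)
The plan is to reinterpret the four-point condition as a statement about a single row of the difference triangle $\Theta_\pi$. Writing the points as $(r,\pi_r),(u,\pi_u),(a,\pi_a),(c,\pi_c)$, the requirements $b-d=s-v$ and $a-c=-(r-u)$ say that the two segments have equal vertical displacement and opposite horizontal displacement. Orienting each segment so that its horizontal displacement is positive and setting $k=|r-u|=|a-c|\ge 1$, the first segment contributes the entry $\varepsilon(\pi_r-\pi_u)$ to row $k$ of $\Theta_\pi$, where $\varepsilon=\mathrm{sign}(r-u)$, while the second contributes $-\varepsilon(\pi_r-\pi_u)$; these are negatives of one another. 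Conversely, any two entries $\delta$ and $-\delta$ occurring in a common row $k$ yield, after reversing one of the two underlying segments, four points of the required form. Since every entry of $\Theta_\pi$ is nonzero, the Proposition is therefore equivalent to the assertion that some row $k\ge 1$ of $\Theta_\pi$ contains two entries summing to zero.

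First I would dispose of row $1$. Its $n-1$ entries $\pi_{i+1}-\pi_i$ are distinct (Costas) and nonzero, so a pair summing to zero exists precisely when two of them share an absolute value, that is, precisely when $\{|\pi_{i+1}-\pi_i|:1\le i\le n-1\}\ne\{1,2,\dots,n-1\}$. Hence if $\pi$ is not graceful we are done. The substantive case is the graceful one, where row $1$ has all distinct absolute values. Here I would argue by contradiction, assuming that no row of $\Theta_\pi$ has two entries summing to zero; equivalently, every row has pairwise distinct absolute values. Two structural consequences drive the argument. First, there are exactly $n-m$ segments with absolute vertical displacement $m$, one for each pair of values differing by $m$; if two of them shared a horizontal displacement $k$ then, being distinct by the Costas property, they would contribute $+m$ and $-m$ to row $k$, a forbidden pair. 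Thus these $n-m$ segments have distinct horizontal displacements, and taking $m=1$ forces the position-gaps between consecutive values to be all distinct, so $\pi^{-1}$ is graceful as well; more generally $\pi^{-1}$ inherits the same ``every row has distinct absolute values'' property. In particular the values $1$ and $n$ lie in adjacent positions, and $\pi_1,\pi_n$ are consecutive integers.

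The main obstacle is converting this highly constrained configuration into an outright contradiction for $n\ge 4$. Pure counting does not suffice: the hypothesis makes the incidences between horizontal displacements $k$ and absolute vertical values $m$ a $0/1$ matrix whose row sums and column sums are both the staircase $(n-1,n-2,\dots,1)$, and such matrices exist by Gale--Ryser, so any contradiction must use the geometry of an actual permutation rather than its difference statistics alone. The route I would pursue is extremal: the graceful condition forces a unique large jump realizing $\pm(n-1)$ between the adjacent entries $1$ and $n$, which in turn pins down the signs of the neighbouring first-order differences and hence of several second-order differences; combining this with the distinct-absolute-value requirement in rows $2$ and $3$ (available since $n\ge 4$) and the companion constraints on $\pi^{-1}$, I expect to trap two equal absolute values in some higher row, contradicting the standing assumption. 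Making this extremal bookkeeping uniform in $n$ is where the real work lies.
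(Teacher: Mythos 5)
Your reduction is correct and cleanly done: the four-point condition in the Proposition is exactly the existence of two entries $\delta$ and $-\delta$ in a common row $k\ge 1$ of $\Theta_\pi$, and since Costas forces distinct entries within a row, this is the same as two entries of equal absolute value. Your intermediate deductions under the contrary hypothesis also check out: the $n-m$ segments between values differing by $m$ must have distinct horizontal gaps, so $\pi^{-1}$ inherits the same property, $1$ and $n$ sit in adjacent positions, and $\pi_1,\pi_n$ are consecutive. Your observation that pure difference-statistics counting cannot finish (the $k$-versus-$m$ incidence matrix with staircase margins is realizable by Gale--Ryser) is a genuinely useful sanity check. But the proof is not complete: the decisive step --- turning the graceful-plus-graceful-inverse configuration into a contradiction for every $n\ge 4$ --- is only announced (``I expect to trap two equal absolute values in some higher row''), with no actual argument. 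That expectation is exactly where the theorem lives, and nothing in your sketch shows the ``extremal bookkeeping'' closes uniformly in $n$. For comparison, note that the paper itself does not prove this statement at all; it quotes it as Proposition 4.2 of Jedwab and Wodlinger, so you are attempting to reprove an external result, and the burden of the missing step is real.

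To see concretely that the remaining work is substantive, push your own deductions one step further: under the contrary hypothesis, $\pi_n-\pi_1=\pm 1$ (you showed $\pi_1,\pi_n$ consecutive), while $\pi_n-\pi_1$ is also the sum of the row-$1$ entries, which are $\varepsilon_1\cdot 1+\varepsilon_2\cdot 2+\cdots+\varepsilon_{n-1}(n-1)$ with signs $\varepsilon_m=\pm 1$ by gracefulness. Reducing mod $2$ gives $\pm 1\equiv n(n-1)/2 \pmod 2$, which is a contradiction when $n\equiv 0,1 \pmod 4$ --- so your framework does finish those residue classes --- but it says nothing when $n\equiv 2,3\pmod 4$ (e.g.\ $n=6,7$), and the same parity applied to $\pi^{-1}$ gives no new information. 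So even the most natural concrete instantiation of your plan handles only half the cases, and the vague ``pin down the signs of the neighbouring first-order differences and trap a repeat in rows $2$ and $3$'' step is not developed enough to judge, let alone accept, for the remaining residues. As it stands, the proposal is a correct and well-organized reduction plus a conjecture that the reduction can be closed; the key idea for the closing argument is missing.
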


Costas arrays are difficult to construct and are conjectured to exist for all $n$; one may consult \cite{Drakakis06} for up-to-date information on their existence.  The smallest $n$ for which the existence of a Costas array is not known is $n=32$. It thus seems natural to investigate permutations under less restrictive conditions. Accordingly,
we define a permutation to be a {\it $k$-Costas permutation} provided that for each $i=0,1,\ldots,k$, its $i$th order differences are distinct. Thus an
$(n-1)$-Costas permutation is a Costas permutation. A 1-Costas permutation is a permutation whose discrete derivative consists of distinct integers. For more on Costas arrays, see \cite{Jedwab, Jedwab2, JedwabLin, SCH}.

In this paper we investigate various properties of the discrete derivative of a permutation, some of which are motivated by the classical derivative of a function. 
We now summarize the contents of this paper. In Section \ref{sec:dd}, we develop some basic properties of the discrete derivative of a permutation. In Section \ref{sec:local-global} we define the local and global variations of a permutation and determine their extreme values with characterizations of equality. Some other extremal questions, and the notion of convexity,  are treated in Section \ref{sec:other}.  Finally, in Section \ref{sec:coda} we  discuss some possible generalizations of the content of this paper.

\section{The discrete derivative}
\label{sec:dd}

The following observation is the analogue of the fact that a function is determined up to a constant by its derivative. Here,  since we are dealing with permutations in $\mc{S}_n$, no constant is involved.

\begin{proposition}
 \label{prop:uniqueness}
  A permutation $\pi \in\mc{S}_n$ is uniquely determined by its discrete derivative, i.e., the function $\pi: \mc{S}_n \rightarrow \mb{Z}^{n-1}$ given by $\pi \rightarrow D(\pi)$ is injective. 
\end{proposition}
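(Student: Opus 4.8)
The plan is to exploit the telescoping structure already highlighted in equation~(\ref{eq:sums}): the derivative records all first-order differences, and summing consecutive differences recovers every entry of $\pi$ relative to its first entry. This pins $\pi$ down up to a single additive constant, and that constant is then forced by the fact that the entries of any permutation in $\mc{S}_n$ sum to one and the same value. This is exactly the analogue, noted before the statement, of an antiderivative being determined up to a constant, with the constant here turning out to be $0$.

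Concretely, I would begin by taking two permutations $\pi,\sigma\in\mc{S}_n$ with $D(\pi)=D(\sigma)$ and show they coincide. Equality of the derivatives means $\pi_j-\pi_{j-1}=\sigma_j-\sigma_{j-1}$ for every $j=2,\ldots,n$. Summing these equalities from $j=2$ to $j=i$ telescopes on each side, giving $\pi_i-\pi_1=\sigma_i-\sigma_1$ for all $i$. Rearranging, $\pi_i-\sigma_i=\pi_1-\sigma_1$ is independent of $i$; that is, $\pi=\sigma+c\mathbf{1}$ for the constant $c=\pi_1-\sigma_1$, where $\mathbf{1}$ is the all-ones vector.

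The remaining step is to show $c=0$. Because $\pi$ and $\sigma$ are both permutations of $\{1,2,\ldots,n\}$, each has entry sum $\sum_{i=1}^n \pi_i=\sum_{i=1}^n\sigma_i=n(n+1)/2$. Summing the identity $\pi_i=\sigma_i+c$ over $i$ then yields $n(n+1)/2=n(n+1)/2+nc$, so $nc=0$ and hence $c=0$. Thus $\pi=\sigma$, which is precisely injectivity of the map $\pi\mapsto D(\pi)$.

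There is no genuine obstacle here; the only point requiring care is the passage from \emph{determined up to a constant} to \emph{determined}, and the cleanest way to close that gap is the entry-sum invariant just used. One could alternatively recover the shift from the requirement that the smallest entry equal $1$, say via $\min_i(\pi_i-\pi_1)=1-\pi_1$, but the sum argument is the most economical and avoids case analysis.
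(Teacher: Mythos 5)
Your proof is correct and takes essentially the same route as the paper: both telescope the equal first-order differences to conclude that two permutations with the same derivative differ by a constant shift $c$. The only (cosmetic) difference is the final step forcing $c=0$ --- the paper notes that a nonzero shift would push some entry outside $\{1,2,\ldots,n\}$, while you use invariance of the entry sum $n(n+1)/2$; both close the gap equally well.
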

\begin{proof}
 First, it is clear that $\pi$ is uniquely determined by the extension
 $(\pi_1, \pi_2-\pi_1,  \ldots, \pi_n-\pi_{n-1})$ of the discrete derivative,
 due to the expression
 \begin{equation}
  \label{eq:pi-sum}
    \pi_k=\pi_1+(\pi_2-\pi_1)+ \cdots + (\pi_k-\pi_{k-1}) \;\;\; (k \le n).
 \end{equation}   
 Next, when $\pi \in\mc{S}_n$ is given, then any other $\pi' \in \mc{S}_n$ with $D(\pi)=D(\pi')$, must be obtained by a shifting of $\pi$ in the sense that $\pi_i'=a+\pi_i \;\;(i \le n)$
 for some integer $a$.  But this implies that $a=0$; otherwise some $\pi'_i$ would not lie in the set $\{1, 2, \ldots, n\}$. Thus $\pi'=\pi$. 
\end{proof}

\begin{example}
\label{ex:first}
{\rm 
If
$\pi=(5,2,7,4,1,6,3) \in \mc{S}_7$, then its corresponding permutation matrix is 
\[
\left[
\begin{array}{c|c|c|c|c|c|c}
&&&&1&&\\ \hline
&1&&&&&\\ \hline
&&&&&&1\\ \hline
&&&1&&&\\ \hline
1&&&&&&\\ \hline
&&&&&1&\\ \hline
&&1&&&&
\end{array}
\right].
\]
In terms of integral points in the plane as previously described, there are only two different line segments 
of  the  six  determined by successive points. The discrete derivative of $\pi$ is $D(\pi)=(-3,5,-3,-3,5,-3)$. 
} \endproof
\end{example}

By Proposition \ref{prop:uniqueness} a permutation $\pi=(\pi_1,\pi_2,\ldots,\pi_n)\in\mc{S}_n$ is determined by the $(n-1)$ entries in row 1 of its difference triangle
$\Theta_{\pi}$. The permutation $\pi$ is, of course,  also determined by any $(n-1)$ of the entries of $\pi$ itself. There are other sets of $(n-1)$ entries of the difference triangle which determine its corresponding permutation.  
Consider the complete graph $K_n$ with vertices labeled $1,2,\ldots,n$. To each edge $\{i,j\}$  of $K_n$ with $i<j$  we give  the weight $(\pi_j-\pi_i)$, thereby obtaining a weighted complete graph $K_n^{\pi}$.


\begin{proposition}\label{prop:tree}
Let $\pi \in\mc{S}_n$. Then any weighted spanning tree $T^{\pi}$ of $K_{n}^{\pi}$ uniquely determines $\pi$.
\end{proposition}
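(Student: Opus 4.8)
The plan is to exploit the telescoping structure of the edge weights together with the fact that a spanning tree connects every pair of vertices. First I would fix a reference vertex, say vertex $1$, and observe that for any other vertex $v$ there is a unique path $1=v_0,v_1,\ldots,v_m=v$ in $T^{\pi}$. For each edge $\{v_{t-1},v_t\}$ of this path, the weight recorded in $K_n^{\pi}$ equals $\pi_{v_t}-\pi_{v_{t-1}}$ up to sign (the sign being $+$ if $v_{t-1}<v_t$ and $-$ otherwise, since weights are assigned to edges oriented from smaller to larger label). Adjusting each tree weight by this sign, the resulting signed weights along the path telescope:
\[
  \sum_{t=1}^{m}\bigl(\pi_{v_t}-\pi_{v_{t-1}}\bigr)=\pi_v-\pi_1.
\]
Thus from $T^{\pi}$ alone I can compute $\pi_v-\pi_1$ for every vertex $v$.

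Next, this recovers the vector $(\pi_1-\pi_1,\pi_2-\pi_1,\ldots,\pi_n-\pi_1)$, that is, $\pi$ up to the unknown additive constant $\pi_1$. At this point I would invoke exactly the argument used in the proof of Proposition \ref{prop:uniqueness}: since $\pi$ is a permutation of $\{1,2,\ldots,n\}$ and the recovered quantities $\pi_v-\pi_1$ are distinct integers (as $\pi$ is injective), there is a unique integer $a$ for which the translated set $\{a+(\pi_v-\pi_1):v=1,\ldots,n\}$ equals $\{1,2,\ldots,n\}$. That unique $a$ must be $\pi_1$, and so the values $\pi_v=a+(\pi_v-\pi_1)$, and hence $\pi$ itself, are determined.

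The content that makes this work is the combination of two features of a spanning tree: it is connected, so every vertex is reachable from the reference vertex and every difference $\pi_v-\pi_1$ is computable; and it has exactly $n-1$ edges, matching the number of nontrivial differences needed to pin down $\pi$. I do not expect a genuine obstacle here. The only point requiring care is the bookkeeping of the orientation signs, since the weight on an edge $\{i,j\}$ is $\pi_j-\pi_i$ with $i<j$ while a path may traverse that edge in either direction; once the signs are handled correctly the telescoping is immediate, and acyclicity of the tree guarantees that the path, and hence the computed value $\pi_v-\pi_1$, is well defined and unambiguous.
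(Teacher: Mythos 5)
Your proof is correct and follows essentially the same route as the paper's: both arguments recover differences $\pi_j-\pi_i$ by telescoping edge weights along tree paths and then fix the additive constant via the permutation condition, exactly as in Proposition~\ref{prop:uniqueness}. The only cosmetic difference is that you anchor all differences at vertex $1$, whereas the paper inductively determines the weights of all edges of $K_n^{\pi}$, in particular the consecutive edges $\{1,2\},\ldots,\{n-1,n\}$, and then cites Proposition~\ref{prop:uniqueness} directly.
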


\begin{proof} The weighted spanning tree $T^{\pi}$ has  $(n-1)$ edges. Let $p,q,r$ be  distinct vertices of $T^{\pi}$ with edges $\{p,q\}$ and $\{q,r\}$ where $p<q$ and $q<r$.  Then the weight of $\{p,q\}$ is $\pi_q-\pi_p$ and the weight of $\{q,r\}$ is $\pi_r-\pi_q$. Since $(\pi_r-\pi_q)+(\pi_q-\pi_p)=\pi_r-\pi_p$, the weight of the edge $\{p,r\}$ is also determined. Proceeding inductively like this, we see that the weights of all the edges of  $K_n^{\pi}$ are determined, in particular the weights of the edges $\{1,2\}, \{2,3\},\ldots,\{n-1,n\}$; thus  the first-order differences of $\pi$, and thus $\pi$ itself (see Proposition \ref{prop:uniqueness}),  are determined. 
\end{proof}

\medskip	
\begin{example}
\label{ex:costas2rep}
{\rm  
Consider  again $n=6$ and $\pi=(3,6,1,5,2,4)$. The difference triangle is 
\[
\begin{array}{rrrrrrrrrrr}
     3&&6&&1&&5&&2&&4\\ 
      &\cellcolor{blue!25}3&&\cellcolor{blue!25}-5&&4&&-3&&2&\\ 
       &&-2&&-1&&1&&\cellcolor{blue!25}-1&&\\ 
        &&&\cellcolor{blue!25}2&&\cellcolor{blue!25}-4&&3&&&\\ 
         &&&&-1&&-2&&&&\\ 
          &&&&&1&&&&& \end{array} .\]
         Consider the  spanning weighted tree of  $T^{\pi}$ of $K_{7}^{\pi}$ with (unweighted) edges
 $F=\{\{1,2\}, \{2,3\},\{4,6\},\{1,4\},\{2,5\} \}$ whose weights are highlighted. Then $F$ is a path $3,2,1,4,6$ with an additional edge $\{2,5\}$. A simple verification checks that the difference triangle is determined  by the highlighted entries.     \endproof  
}\end{example}

It is  natural to ask which vectors in $\mb{Z}^n$ are discrete derivatives of permutations in $\mc{S}_n$.

Define, for an integral vector $z=(z_1, z_2, \ldots, z_{n-1})$, the set 
 \begin{equation}\label{eq:sums2}
   S_z:=\{0\} \cup \left\{\sum_{k=1}^i z_k: 1 \le i \le  n-1\right\}.
 \end{equation}

\begin{proposition}
 \label{pr:char}
  Let  $z=(z_1, z_2, \ldots, z_{n-1})$ be an integral vector. Then $z$ is the discrete derivative of some permutation in $\mc{S}_n$ if and only if $S_z$ is a set of $n$ consecutive integers containing $0$. 
\end{proposition}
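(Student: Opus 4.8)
The plan is to exploit the telescoping identity connecting the partial sums of $z$ with the entries of a candidate permutation. Writing $s_0=0$ and $s_i=\sum_{k=1}^i z_k$ for $1\le i\le n-1$, observe first that if $z=D(\pi)$ for some $\pi\in\mc{S}_n$, then $z_k=\pi_{k+1}-\pi_k$, and the telescoping sum in (\ref{eq:pi-sum}) gives $s_i=\pi_{i+1}-\pi_1$ for each $i$. Hence, as a set,
\[ S_z=\{s_0,s_1,\ldots,s_{n-1}\}=\{\pi_j-\pi_1:1\le j\le n\}. \]
This reformulation is the crux; both directions follow quickly from it.

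For the forward direction, I would note that since $\pi$ is a permutation, $\{\pi_1,\ldots,\pi_n\}=\{1,2,\ldots,n\}$, so $\{\pi_j-\pi_1:1\le j\le n\}=\{1-\pi_1,2-\pi_1,\ldots,n-\pi_1\}$ is a block of $n$ consecutive integers, and it contains $0=\pi_1-\pi_1$. Thus $S_z$ has the claimed form.

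For the converse, suppose $S_z=\{m,m+1,\ldots,m+n-1\}$ is a set of $n$ consecutive integers with $m\le 0\le m+n-1$. Since $S_z=\{s_0,s_1,\ldots,s_{n-1}\}$ is listed by $n$ indices but has exactly $n$ distinct elements, the values $s_0,s_1,\ldots,s_{n-1}$ must be pairwise distinct; this is precisely where the cardinality hypothesis does its work. I would then define $\pi_{i+1}=s_i-m+1$ for $i=0,1,\ldots,n-1$. Because $\{s_0,\ldots,s_{n-1}\}=\{m,\ldots,m+n-1\}$, the translated values run over $\{1,2,\ldots,n\}$ bijectively, so $\pi=(\pi_1,\ldots,\pi_n)\in\mc{S}_n$. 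Finally, $\pi_{k+1}-\pi_k=(s_k-m+1)-(s_{k-1}-m+1)=s_k-s_{k-1}=z_k$, so $D(\pi)=z$, as required.

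There is no genuine obstacle here: the argument is essentially a change of variables via the partial-sum map. The only point requiring care is the converse, where one must recognize that the hypothesis ``$S_z$ is a set of $n$ consecutive integers'' silently forces all $n$ partial sums $s_0,\ldots,s_{n-1}$ to be distinct (otherwise $|S_z|<n$ and $S_z$ could not have $n$ elements), and this distinctness is exactly what guarantees that the constructed $\pi$ is injective, hence a permutation.
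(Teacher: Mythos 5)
Your proof is correct, and your converse takes a genuinely different --- and in fact tighter --- route than the paper's. The forward direction is the same in both: telescope via (\ref{eq:pi-sum}) to identify $S_z$ with $\{\pi_j-\pi_1 : 1\le j\le n\}$, a translate of $\{1,\ldots,n\}$ containing $0$. For the converse, the paper fixes an admissible set $S=\{-s,\ldots,n-s-1\}$ and exhibits one explicit permutation $\pi=(s+1,1,2,\ldots,s,s+2,\ldots,n)$ whose derivative $z'$ satisfies $S_{z'}=S$; read literally, this shows that every set of $n$ consecutive integers containing $0$ is \emph{attained} as a sum-characteristic, but it does not address the given vector $z$ itself, which need not equal the $z'$ constructed. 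You instead invert the partial-sum map directly: the hypothesis $|S_z|=n$ forces the partial sums $s_0,\ldots,s_{n-1}$ to be pairwise distinct (a point you rightly flag as the crux), so $\pi_{i+1}:=s_i-m+1$ is a bijection onto $\{1,2,\ldots,n\}$, hence a permutation, and differencing recovers $z_k=\pi_{k+1}-\pi_k$, so $D(\pi)=z$ exactly. Your version thus buys a complete proof of the stated biconditional for arbitrary $z$ at no extra cost; what the paper's construction buys is an explicit canonical representative for each sum-characteristic, which it reuses immediately afterwards (e.g., in Example \ref{ex:first-returned} and in counting the $(n-1)!$ permutations sharing a sum-characteristic), but as a proof of Proposition \ref{pr:char} itself, yours is the more faithful argument.
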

\begin{proof}
 Assume $z=D(\pi)$ for some $\pi \in \mc{S}_n$. Then, by (\ref{eq:pi-sum}), 
 \[
   \sum_{k=1}^i z_k =(\pi_2-\pi_1) + (\pi_3-\pi_2) + \cdots +(\pi_{i+1}-\pi_i)=\pi_{i+1}-\pi_1
 \]
 for $i \le n-1$. 
 So the numbers $\sum_{k=1}^i z_k$ $(i \le n-1)$ are 
 \[
    \pi_2-\pi_1, \pi_3-\pi_1, \ldots, \pi_n-\pi_1.
 \]
 Therefore $S_z$ consists of the numbers $\pi_1, \pi_2, \ldots, \pi_n$ with $\pi_1$ subtracted from each, and this is a set of $n$ consecutive integers containing $0$. 
 
 Conversely, let $S$ be a set of $n$ consecutive integers containing $0$, so 
  \[
    S=\{-s, -s+1, \ldots, -1, 0, 1, \ldots, n-s-1\}
 \]
 for some integer $s$ with $0 \le s \le n-1$. Let $\pi$ be the permutation
 \[
      \pi=(s+1, 1, 2, 3, \ldots, s, s+2, \ldots, n).
 \]
 Then $z:=D(\pi)=(-s, 1, 1, \ldots, 1, 2, 1, 1, \ldots, 1)$, where the entry $2$ occurs in position $s+1$. So the numbers $\sum_{k=1}^i z_k$ ($1 \le i \le  n-1$) become 
 \[
    -s, -s+1, -s+2, \ldots, -1, 1, 2, \ldots, n-s-2
 \]
 and therefore $S_z=\{-s, -s+1, \ldots, -1, 0, 1,  \ldots, n-s-1\}=S$, as desired. 
\end{proof}

\medskip

For a  permutation $\pi$ in ${\mathcal S}_n$, we call the set $S_{D(\pi)}$ the {\it sum-characteristic} of $\pi$.
There are $n$ sets of $n$ consecutive integers containing 0, namely,
\begin{equation}\label{eq:sumchar}
\{-(n-1),\ldots,-1,0\},\{ -(n-2),\ldots,0,1\}, \ldots, \{0,1,\ldots,n-1\},\end{equation}
and by Proposition \ref{pr:char}
each of the sets in (\ref{eq:sumchar}) is the sum-characteristic of a permutation
in ${\mathcal S}_n$. The sum-characteristic of a permutation $\pi=(\pi_1,\pi_2,\ldots,\pi_n)$ is uniquely determined by $\pi_1$ and thus is the sum-characteristic  of $(n-1)!$ permutations in ${\mathcal S}_n$.
To see this, we observe that as in the proof of Proposition \ref{pr:char},  the entries of the sum-characteristic are $\{\pi_2-\pi_1,\pi_3-\pi_1,\ldots,\pi_n-\pi_1\}$. Thus if two permutations have the same $\pi_1$, then the entries $\{\pi_2,\pi_3,\ldots,\pi_n\}$ are $\{1,2,\ldots,n\}\setminus \{\pi_1\}$ 
and hence their sum-characteristics are identical. 

\medskip

Let $\pi\in{\mathcal S}_n$ with discrete derivative $D(\pi)=(a_1,a_2,\ldots, a_{n-1})$. It follows from (\ref{eq:sums})  that $\pi$ is a Costas permutation if and only if
the integers in each of the sequences
\[a_i+a_{i+1}+\cdots+a_{i+k}\quad (1\le i\le i+k\le n-1;k\ge 0)\]
are distinct.
Hence by Proposition \ref{pr:char},  a Costas permutation of order
$n$ exists if and only if there is a sequence $(b_1,b_2,\ldots,b_{n-1})$ of $n-1$ integers which, with 0, can be reordered to form a consecutive set of integers such that 
the integers in each of the sequences
\[b_i+b_{i+1}+\cdots+b_{i+k}\quad (1\le i\le i+k\le n-1; k\ge 0)\]
are distinct. Thus the existence of Costas permutations can be regarded as a problem within additive number theory.

\begin{example}
\label{ex:first-returned}
{\rm 
Consider the permutation in Example \ref{ex:first}, so  
$\pi=(5,2,7,4,1,6,3) \in \mc{S}_7$ and $D(\pi)=(-3,5,-3,-3,5,-3)$. Then, if $z=D(\pi)$, we get 
\[
  S_{z}=\{0, -3,2,-1,-4,1,-2\}=\{-4,-3,-2,-1,0,1,2\}.
\]  
We now construct another permutation $\pi'$ such that $S_{z'}=S_z$ where $z'=D(\pi')$. Let $z'=(-4,1,1,1,2,1)$. Then $S_{z'}=\{-4,-3,-2,-1,0,1,2\}=S_{z}$, and, for instance, the permutation $\pi'=(5,1,2,3,4,6,7)$ is such that 
$D(\pi')=(-4,1,1,1,2,1)$.\endproof
} 
\end{example}

We now consider some questions related to the signs and values of the discrete derivative of a permutation. 
It is easy to see that there is only one permutation $\pi \in \mc{S}_n$ having only positive discrete derivatives, namely the {\it identity permutation} $\pi=\iota_n$ corresponding to the $n\times n$ identity matrix $I_n$.
Similarly,  the {\it anti-identity permutation} $\zeta_n=(n,\ldots,2,1)$ 
corresponding to the 
 backward identity matrix $L_n=[l_{ij}]$, where $l_{ij}=1$ when $j=n-i+1$ and 0 otherwise ($i \le n$),
is the only permutation with only negative derivatives.  A permutation in $\mc{S}_n$ is a {\em Grassmannian permutation} provided it has only one descent. Grassmannian permutations are the only permutations $\pi$ with only one negative entry in their discrete derivative. 
 Henceforth we generally refer to a discrete derivative simply as a derivative.
 
We now consider permutations 
 with only two  distinct values in their discrete derivative. 
An example of a permutation  all of whose whose derivative values are $5$ or $-3$ is given in Example \ref{ex:first}. Another similar example is 
$(6,3,5,2,4,1)$ where $D(\pi)=(-3,2,-3,2,-3)$. 

\medskip
Let $p$ and $q$ be distinct integers. If for some $n\ge 1$ there exists a permutation $\pi \in \mc{S}_n$,  such that $\{D(\pi)_i: i \le n-1\}=\{p,q\}$, then we say that $(p,q)$ is a {\em $D$-pair}. Let $(p,q)$ be a $D$-pair. Then $(q,p)$ is also a $D$-pair, and therefore we assume hereafter than $|p|\ge |q|$. In fact, we may also assume $p>0$ because, the reverse of a permutation has the same derivative values as the original but with opposite signs.

\begin{lemma}
  \label{lem:basic-D-pair}
    Assume that $n\ge 3$ and that $(p,q)$ is a $D$-pair. Then $p$ and $q$ have opposite signs, and $p$ and $q$ are relatively prime.
\end{lemma}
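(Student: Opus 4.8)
The plan is to push everything back to the characterization of derivatives in Proposition \ref{pr:char}. Writing $z=D(\pi)=(z_1,\ldots,z_{n-1})$, each entry $z_k$ equals $p$ or $q$, and by Proposition \ref{pr:char} the partial sums $s_i=\sum_{k=1}^{i}z_k$ together with $0$ form the set $S_z$ of $n$ consecutive integers. In particular the $n$ numbers $0,s_1,\ldots,s_{n-1}$ are distinct, and since consecutive entries of a permutation always differ, neither $p$ nor $q$ is $0$. These two observations are the whole engine of the argument.

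For the sign claim I would argue by contradiction. Because $p>0$ by the standing assumption, ``not opposite signs'' forces $q>0$ as well, so every step $z_k$ is positive and the partial sums strictly increase: $0<s_1<\cdots<s_{n-1}$. Then $0$ is the least element of the consecutive block $S_z$, which forces $S_z=\{0,1,\ldots,n-1\}$ and $s_i=i$ for every $i$. Hence each $z_i=s_i-s_{i-1}=1$, so $p=q=1$, contradicting $p\neq q$. The hypothesis $n\ge 3$ enters only to guarantee that the derivative has at least two entries, which is exactly what allows a $D$-pair to exist. Thus $p$ and $q$ must have opposite signs.

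For coprimality, let $d=\gcd(p,q)$. Every $z_k\in\{p,q\}$ is divisible by $d$, hence so is each partial sum $s_i$, and of course $0$ is too. Therefore every element of $S_z$ is a multiple of $d$. But $S_z$ is a block of $n\ge 3>1$ consecutive integers, and any two consecutive integers both divisible by $d$ force $d\mid 1$. Hence $d=1$, so $p$ and $q$ are relatively prime.

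Neither step presents a serious obstacle once the translation via Proposition \ref{pr:char} is in place; the care required is in extracting the right consequences of ``$S_z$ is a set of $n$ consecutive integers,'' namely that $0$ is pinned to be an endpoint of the block when the walk is monotone, and that a block of consecutive integers cannot lie inside $d\mb{Z}$ for $d\ge 2$. I would regard the monotonicity deduction in the sign claim as the (modest) crux, since it is the place where the consecutive-integer structure does the real work.
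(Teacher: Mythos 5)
Your proof is correct and is essentially the paper's argument in a different dress: since the partial sums $s_i$ equal $\pi_{i+1}-\pi_1$, your monotonicity step reproduces the paper's observation that all-positive (resp.\ all-negative) derivatives force the identity (resp.\ anti-identity) permutation with all derivatives $\pm 1$, and your divisibility step is the paper's remark that $\pi_2,\ldots,\pi_n$ would all differ from $\pi_1$ by a multiple of $d=\gcd(p,q)$, impossible for a permutation. Routing both steps through Proposition \ref{pr:char} is a fine, slightly more formal packaging, but not a genuinely different route.
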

\begin{proof}
The only permutation with all derivatives positive (and thus all equal to $1$), is the identity permutation, and the only permutation with all derivatives negative (and thus all equal to $-1$), is the anti-identity permutation. So $p$ and $q$ have opposite signs. If $\gcd(p,q):=d \ge 2$, then each of $\pi_2,\pi_3,\ldots,\pi_n$ differ from $\pi_1$ by a multiple of $d$, an impossibility as $\pi$ is a permutation.
\end{proof}

We next show that the conditions on $p$ and $q$ discussed above provide a  characterization of   $D$-pairs. 

\begin{theorem}
 \label{thm:char-D2}
  Let $a$ and $b$ be relatively prime integers with $1 \le a<b\le n-1$. Then there exists an  integer $n$ such that $(a,-b)$ is a $D$-pair corresponding to a permutation in ${\mathcal S}_n$.
\end{theorem}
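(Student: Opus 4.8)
The plan is to take $n=a+b$ and build an explicit permutation of $\{1,2,\ldots,n\}$ whose consecutive differences are all equal to either $a$ or $-b$, with both values actually occurring. Observe first that since $\gcd(a,b)=1$ and $n=a+b$ we have $\gcd(a,n)=\gcd(a,b)=1$, and moreover $1\le a<b\le a+b-1=n-1$, so the stated size constraint holds automatically for this $n$. I would then use a cyclic ``add $a$ modulo $n$'' construction: starting from a value $\pi_1\in\{1,\ldots,n\}$, set $\pi_{i+1}=\pi_i+a$ when this stays in $\{1,\ldots,n\}$, and $\pi_{i+1}=\pi_i+a-n=\pi_i-b$ otherwise.

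Because $\gcd(a,n)=1$, the residues $\pi_1,\pi_1+a,\pi_1+2a,\ldots$ modulo $n$ run through all of $\{1,\ldots,n\}$ before repeating, so the first $n$ terms trace a single Hamiltonian cycle on the residues and $\pi=(\pi_1,\ldots,\pi_n)$ is a genuine permutation in $\mc{S}_n$. By construction every first-order difference $\pi_{i+1}-\pi_i$ equals $+a$ (no wrap) or $a-n=-b$ (wrap), so $\{D(\pi)_i:i\le n-1\}\subseteq\{a,-b\}$. Alternatively, one may verify consistency through Proposition \ref{pr:char}: since $\pi$ is a permutation, its sum-characteristic $S_{D(\pi)}=\{1,\ldots,n\}$ shifted by $-\pi_1$ is automatically a block of $n$ consecutive integers containing $0$.

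The one thing that still needs care is that both values genuinely occur, so that the pair equals $\{a,-b\}$ rather than a singleton. First I would count step types over one full cycle: since $\gcd(a,n)=1$, the unbounded running total increases by $na$ and must return to its start after exactly $a$ wraparounds, so the cycle contains exactly $a$ steps of type $-b$ and $b$ steps of type $+a$. Our permutation uses only $n-1$ of these steps, omitting the single edge that would return $\pi_n$ to $\pi_1$, so I must choose $\pi_1$ so that this omission leaves at least one step of each type. When $a\ge 2$ both counts exceed $1$, and since $b>a\ge 2$ forces $b\ge 3$, cutting any single edge leaves at least one step of each type, so any $\pi_1$ works. The only delicate case is $a=1$, where the cycle has a unique $-b$ step (from $n$ to $1$); here I would simply take $\pi_1=2$, giving $\pi=(2,3,\ldots,n,1)$ with differences a run of $+1$'s followed by one $-b$, and both values appear because $b\ge 2$.

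The main obstacle is precisely this nondegeneracy check: guaranteeing that the $(n-1)$-edge Hamiltonian path, obtained by cutting one edge from the $+a/-b$ cycle, retains \emph{both} difference values rather than collapsing to the identity or anti-identity pattern. Everything else---that $\pi$ is a bona fide permutation and that its derivative values lie in $\{a,-b\}$---follows immediately from the cyclic construction together with the coprimality $\gcd(a,n)=1$.
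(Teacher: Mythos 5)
Your proposal is correct and essentially reproduces the paper's proof: the same choice $n=a+b$, the same cyclic ``add $a$ modulo $n$'' construction reduced into $\{1,2,\ldots,n\}$ (including the same special permutation $(2,3,\ldots,b+1,1)$ for $a=1$), and the same use of $\gcd(a,a+b)=\gcd(a,b)=1$ to verify injectivity, hence that $\pi$ is a permutation. The only minor difference is your nondegeneracy check---counting exactly $a$ wrap steps and $b$ ascent steps around the full cycle and noting that deleting one edge leaves both types when $a\ge 2$---where the paper instead starts at $\pi_1=1$ and observes $p_n=1+(a+b-1)a>n$ to force at least one $-b$ step; both verifications are valid.
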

\begin{proof}
 We first treat the case when $a=1$. Let $\pi=(2, 3, \ldots, b+1,1) \in \mc{S}_{b+1}$. Then $D(\pi)=(1, 1, \ldots, 1, -b)$, so $(1,-b)$ is a $D$-pair corresponding to the permutation $\pi$. 
  
 Next, let $a \ge 2$ and
 let $n=a+b$. Define $p_i=1+(i-1)a$ for $i=1,2, \ldots, n$. Also let $\pi_i$ be (uniquely) defined by
 \begin{equation}
  \label{eq:pi-p}
      \pi_i \equiv p_i \mod(a+b), \;\, \mbox{\rm and} \;\; \pi_i \in \{1, 2, \ldots, n\}.
 \end{equation}
 We show that $\pi=(\pi_1, \pi_2, \ldots, \pi_n)$ is a permutation with the desired properties.
 
 Assume that $\pi_i = \pi_j$ for some $i,j$ with $1\le i<j\le n$. This implies that 
 \[
     1+(i-1)a  \equiv 1+(j-1)a \mod(a+b)
 \]
 and therefore 
 \[
     ia  \equiv ja \mod(a+b).
 \]
 Since $\gcd(a,a+b)=\gcd(a,b)=1$, we see that $i  \equiv j \mod(a+b)$ and hence  $i=j$. 
  This proves that $\pi$ is a permutation. 
  
  Next we consider the derivatives of $\pi$.   
 For each $i$ we consider two possibilities:
 
\noindent  {\em Case $1$:  $(s-1)(a+b) < p_i < p_{i+1}\le s(a+b)$ for some $s$}.  Then $D(\pi)_i=\pi_{i+1}-\pi_i=a$. 
 
\noindent  {\em Case $2$:  $p_i \le s(a+b)$ and $p_{i+1}> s(a+b)$ for some $s$}.  The  facts that  $p_{i+1}=p_{i}+a$ and $\pi$ is obtained by reducing the $p_i$'s modulo $a+b$ implies  that
 \[   \pi_{i+1}-\pi_i=a-(a+b)=-b.
\]

Note that Case 2 will occur for some $i$ as $p_n>n$. In fact, 
\[
   p_n=1+(a+b-1)a>a+b=n, 
\]
as $(a+b-1)a>a+b-1$ because $a \ge 2$. 

This shows that $\{D(\pi)_i: i \le n-1\}=\{a,-b\}$, and we conclude that  $(a,-b)$ is a $D$-pair and $\pi$ is a realization of $(a,-b)$.
\end{proof}

In terms of permutation matrices, 
the proof just given  constructs an $n\times n$  permutation matrix $P$ with $n=a+b$ that realizes a $D$-pair $(a,-b)$.  The construction is easy to describe: for $a\ge 2$ put a 1 in position $(1,1)$ and, row by row, move $a$ columns to the right computing column indices modulo $(a+b)$, that is, move cyclically from row to row. For $a=1$ we do the same, but start in position $(1,2)$. 
The following example illustrates this construction.

\begin{example}
\label{ex:two3}
{\rm 
Consider $a=5$, $b=13$.  Here 5 and 13 are relatively prime, so $(5,-13)$ is a $D$-pair. 
Then $n=a+b=18$. The construction just discussed then gives  the permutation 
\[
   \pi=(1,6,11,16,3,8,13,18,5,10,15,2,7,12,17,4,9,14)\in \mc{S}_{18}
\]
whose corresponding permutation matrix is 
\begin{equation}\label{eq:exD}
\left[
\begin{array}{c|c|c|c|c|c|c|c|c|c|c|c|c|c|c|c|c|c}
1&&&&&&&&&&&&&&&&&\\ \hline
&&&&&1&&&&&&&&&&&&\\ \hline
&&&&&&&&&&1&&&&&&&\\ \hline
&&&&&&&&&&&&&&&1&&\\ \hline \hline
&&1&&&&&&&&&&&&&&&\\ \hline
&&&&&&&1&&&&&&&&&&\\ \hline
&&&&&&&&&&&&1&&&&&\\ \hline
&&&&&&&&&&&&&&&&&1\\ \hline \hline
&&&&1&&&&&&&&&&&&&\\ \hline
&&&&&&&&&1&&&&&&&&\\ \hline
&&&&&&&&&&&&&&1&&&\\ \hline \hline
&1&&&&&&&&&&&&&&&&\\ \hline
&&&&&&1&&&&&&&&&&&\\ \hline
&&&&&&&&&&&1&&&&&& \\ \hline
&&&&&&&&&&&&&&&&1&\\ \hline \hline
&&&1&&&&&&&&&&&&&&\\ \hline
&&&&&&&&1&&&&&&&&&\\ \hline
&&&&&&&&&&&&&1&&&&\\ 
\end{array}
\right].
\end{equation}
The double horizontal lines indicate where the derivative changes from  positive to negative. So   \[D(\pi)=(5,5,5,-13,5,5,5,-13,5,5,-13,5,5,5,-13,5,5).\]
 Our construction is a simple generalization of the standard full-cycle permutation matrix ($a$ equals 1) on which the definition of a circulant matrix rests.
 
 The inverse of $\pi$ is given by
 \[\pi^{-1}=(1,12,5,16,9,2,13,6,17,10,3,14,7,18,11,4,15,8),\]
 and 
 \[D(\pi^{-1})=(11,-7,11,-7,-7,11,-7,11,-7,-7,11,-7,11,-7,-7,11,-7).\]
 Thus $\pi^{-1}$ realizes the $D$-pair $(11,-7)$. Since the permutation matrix corresponding to $\pi^{-1}$ is the transpose of the permutation matrix (\ref{eq:exD}), these differences result by cyclically considering the columns of (\ref{eq:exD}).
} \endproof
\end{example}

\begin{example}
{\rm 
Let $a=4$, $b=5$. Then $n=a+b=9$ and the construction above gives the following $D$-realization of the $D$-pair $(4,-5)$:
\[
\left[
\begin{array}{c|c|c|c|c|c|c|c|c}
1&&&&&&&&\\ \hline
&&&&1&&&&\\ \hline
&&&&&&&&1\\ \hline
&&&1&&&&&\\ \hline
&&&&&&&1&\\ \hline
&&1&&&&&&\\ \hline
&&&&&&1&&\\ \hline
&1&&&&&&&\\ \hline
&&&&&1&&&\\ 
\end{array}
\right].
\]
The permutation is $\pi=(1,5,9,4,8,3,7,2,6) \in \mc{S}_9$ and its discrete derivative is $D(\pi)=(4,4,-5,4,-5,4,-5)$.  We have that $\pi^{-1}=(1,8,6,4,2,9,7,5,3)$ and $D(\pi^{-1})=(7,-2,-2,-2,7,-2,-2,-2)$.
} \endproof
\end{example}

\begin{corollary}
 \label{cor:char-D2}
  Let $a$ and $b$ be relatively prime integers with $0<a<b$, and let $\pi$ be the permutation constructed in the proof of 
Theorem
 $\ref{thm:char-D2}$.
Let $a'$ be the inverse of $a$ modulo $a+b$, and let $b'=a+b-a'$. Then $(a',b')$ is  a $D$-pair corresponding to the permutation $\pi^{-1}$.
\end{corollary}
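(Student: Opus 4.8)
The plan is to exploit the fact that the permutation $\pi$ built in the proof of Theorem~\ref{thm:char-D2} is an \emph{affine modular} permutation. Writing $m=a+b$, one has $\pi_i\equiv c+(i-1)a\pmod{m}$ with the values reduced into $\{1,2,\ldots,m\}$, where $c=1$ when $a\ge 2$ and $c=2$ when $a=1$ (the shifted start described in the remark after the theorem). So the first step is to record this uniform description, and to note via Lemma~\ref{lem:basic-D-pair} that the claimed pair $(a',b')$ is to be read as the pair of derivative values $\{a',-b'\}$ of opposite signs.

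Next I would compute $\pi^{-1}$ directly from the congruence. Setting $j=\pi_i$ and solving $j\equiv c+(i-1)a\pmod{m}$ for $i$ --- which is legitimate since $\gcd(a,m)=\gcd(a,b)=1$ guarantees that the inverse $a'$ of $a$ modulo $m$ exists --- gives $i\equiv 1+(j-c)a'\pmod{m}$. Hence $\pi^{-1}_j$ has exactly the same affine modular shape as $\pi$, but with slope $a'$ in place of $a$; the additive offset is irrelevant to what follows. This is the conceptual heart of the corollary: inverting a linear modular permutation simply replaces the slope by its modular inverse.

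The third step is to read off the derivatives of $\pi^{-1}$. Because $1\le a'\le m-1$, the increment $a'$ between successive values $1+(j-c)a'$ and $1+j\,a'$ can produce at most one wraparound modulo $m$ per step, so the identical case analysis from the proof of Theorem~\ref{thm:char-D2} applies verbatim: when no wraparound occurs the derivative equals $a'$, and when one occurs it equals $a'-m=-(a+b-a')=-b'$. Thus every entry of $D(\pi^{-1})$ is either $a'$ or $-b'$.

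Finally I would confirm that \emph{both} values actually occur, so that $\{a',-b'\}$ is a genuine $D$-pair rather than a single repeated value. Since $\pi$ realizes the $D$-pair $(a,-b)$, it has derivatives of both signs, so $\pi\notin\{\iota_n,\zeta_n\}$, and therefore $\pi^{-1}\notin\{\iota_n,\zeta_n\}$ as well. By the observations preceding Lemma~\ref{lem:basic-D-pair}, a permutation all of whose derivatives share one sign must be $\iota_n$ (all positive) or $\zeta_n$ (all negative); hence not all entries of $D(\pi^{-1})$ can equal $a'$, nor can they all equal $-b'$. Both values appear, and $\pi^{-1}$ realizes $(a',-b')$. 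I expect the only mildly delicate points to be this non-degeneracy argument and the uniform bookkeeping of the offset $c$ across the $a=1$ and $a\ge 2$ cases; the algebraic core, that inversion sends slope $a$ to slope $a'$, is immediate. As a sanity check, Example~\ref{ex:two3} with $a=5$, $b=13$ gives $m=18$, $a'=11$, $b'=7$, matching $D(\pi^{-1})=(11,-7,\ldots)$.
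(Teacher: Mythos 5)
Your proof is correct, and it is essentially the paper's own (implicit) argument made rigorous: the paper states this corollary without proof, its only gesture being the remark in Example \ref{ex:two3} that the matrix of $\pi^{-1}$ is the transpose of that of $\pi$, so the derivative of $\pi^{-1}$ arises by stepping cyclically through the columns with step $a'$. Your affine-modular computation captures exactly this --- inverting $i \mapsto c+(i-1)a \pmod{a+b}$ replaces the slope $a$ by $a'$, after which $\pi^{-1}_{j+1}-\pi^{-1}_j$ is congruent to $a'$ modulo $a+b$, nonzero, and of absolute value less than $a+b$, hence equals $a'$ or $a'-(a+b)=-b'$. You also correctly supply three details the paper leaves unstated: the sign convention (the pair realized is $(a',-b')$, consistent with Lemma \ref{lem:basic-D-pair} and confirmed by the example), the offset $c=2$ needed to fold the $a=1$ construction into the same formula, and the non-degeneracy argument via the identity/anti-identity characterization showing both values actually occur.
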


\section{Local and global variation of a permutation}
\label{sec:local-global}

In this section we define the  local and global variation of permutations and investigate some of their properties.

Let $\pi=(\pi_1,\pi_2,\ldots,\pi_n)$ be a permutation of $\{1,2,\ldots,n\}$. Then
\begin{itemize}
\item[\rm (a)] The {\it local variation} of $\pi$ is given by 
\[\delta(\pi)=\max\{|\pi_{i+1}-\pi_i|:1\le i\le n-1\},\]
the maximum absolute value of the  derivative values of $\pi$.
\item[\rm (b)] The {\it global variation} of $\pi$ is given by
\[\Delta(\pi)=\sum_{i=1}^{n-1} |\pi_{i+1}-\pi_i|=\|D(\pi)\|_1,\]
the $\ell_1$-norm of the discrete derivative.
\end{itemize}

The following proposition is clear.

\begin{proposition}
 \label{pr:min-max-local}
 For a permutation $\pi\in \mc{S}_n$,
 \[
    1\le \delta(\pi)\le  n-1
 \]
 with equality on the lower end if and only if $\pi=\iota_n\mbox{ or }\zeta_n$ and equality 
 on the upper end if  and only if $1$ and $n$ are adjacent in $\pi$.\hfill{$\Box$}
\end{proposition}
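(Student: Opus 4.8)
The plan is to prove the two-sided bound on $\delta(\pi)$ together with the characterizations of equality. The upper bound $\delta(\pi)\le n-1$ is immediate: since every $\pi_i$ lies in $\{1,2,\ldots,n\}$, the largest possible absolute difference $|\pi_{i+1}-\pi_i|$ is $n-1$, achieved only when $\{\pi_i,\pi_{i+1}\}=\{1,n\}$. The lower bound $\delta(\pi)\ge 1$ is also clear, since $\pi$ is injective so no difference $\pi_{i+1}-\pi_i$ is $0$, whence $|\pi_{i+1}-\pi_i|\ge 1$ for every $i$.

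For the equality characterizations I would argue each side separately. On the upper end, $\delta(\pi)=n-1$ means some consecutive difference has absolute value $n-1$, and as noted this forces $\{\pi_i,\pi_{i+1}\}=\{1,n\}$ for that index $i$, which is precisely the statement that $1$ and $n$ are adjacent in $\pi$; conversely, if $1$ and $n$ are adjacent then that difference equals $\pm(n-1)$, giving $\delta(\pi)=n-1$. On the lower end, equality $\delta(\pi)=1$ means $|\pi_{i+1}-\pi_i|=1$ for every $i$, i.e.\ each derivative value is $+1$ or $-1$. Here I would invoke Lemma~\ref{lem:basic-D-pair}: if both values $+1$ and $-1$ occurred, then $(1,-1)$ would be a $D$-pair with $p=1,q=-1$, but $\gcd(1,1)=1$ is fine so the coprimality condition does not rule it out---instead I appeal directly to the already-established fact (stated just before Lemma~\ref{lem:basic-D-pair}) that the only permutation with all derivatives $+1$ is $\iota_n$ and the only one with all derivatives $-1$ is $\zeta_n$. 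The cleaner route is to show all derivatives must share the same sign.

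Concretely, suppose $\delta(\pi)=1$ so each $D(\pi)_i\in\{+1,-1\}$. If two consecutive derivative values had opposite signs, say $D(\pi)_i=+1$ and $D(\pi)_{i+1}=-1$, then $\pi_i=\pi_{i+2}$, contradicting injectivity of $\pi$; the symmetric case $D(\pi)_i=-1,D(\pi)_{i+1}=+1$ likewise forces $\pi_i=\pi_{i+2}$. Hence all derivative values are equal---all $+1$ or all $-1$---so $\pi$ is either $(\pi_1,\pi_1+1,\ldots)$ or $(\pi_1,\pi_1-1,\ldots)$, and since $\pi$ ranges over $\{1,\ldots,n\}$ this forces $\pi=\iota_n$ or $\pi=\zeta_n$ respectively. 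The converse is immediate, as $\iota_n$ and $\zeta_n$ have all derivatives $+1$ and $-1$ respectively.

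The main (and only mild) obstacle is organizing the lower-end equality case cleanly: the temptation is to cite the $D$-pair machinery, but since $(1,-1)$ is coprime the lemma does not directly forbid it, so the argument must instead rest on the elementary adjacency/injectivity observation that consecutive $\pm 1$ steps of opposite sign revisit a value. Everything else is routine, which is consistent with the paper's remark that ``the following proposition is clear.''
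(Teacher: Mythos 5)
Your proof is correct and is exactly the routine argument the paper has in mind when it omits the proof with the remark that the proposition ``is clear'': the bounds follow from injectivity and the range $\{1,\ldots,n\}$, the upper-end equality is the adjacency of $1$ and $n$, and the lower-end equality follows from your observation that two consecutive derivative values of opposite sign in $\{+1,-1\}$ would force $\pi_i=\pi_{i+2}$, so all derivatives share one sign and $\pi=\iota_n$ or $\zeta_n$. Your digression through Lemma~\ref{lem:basic-D-pair} is unnecessary but harmless---and rightly abandoned, since that lemma only gives \emph{necessary} conditions for a $D$-pair and so could never rule out $(1,-1)$; your direct injectivity argument (which in fact shows $(1,-1)$ is not a $D$-pair at all for $n\ge 3$) is the right way to close the case.
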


Now consider a   1-Costas permutation $\pi\in \mc{S}_n$ whose derivative values are thus distinct.  Then the lower bound is not attainable if $n\ge 3$.  Since there are $(n-1)$ differences and they can be positive or negative,  then 
\begin{equation}
 \label{eq:delta-lb}
   \delta(\pi) \ge \lceil n/2\rceil.
\end{equation}
In fact, if $n$ is even, the smallest $(n-1)$ values that the derivative could have are  $\pm 1, \pm 2,\ldots,\pm (\frac{n}{2}-1)$ and either $\frac{n}{2}$ or $-\frac{n}{2}$. Thus (\ref{eq:delta-lb}) holds when $n$ is even.
Next,  assume that $n$ is odd. If $\delta(\pi)=\frac{n-1}{2}$, then the values of the derivative $D(\pi)$ are $\{\pm 1, \pm 2,\ldots,\pm \frac{n-1}{2}\}$ and hence $\sum_{i=2}^n (\pi_{i}-\pi_{i-1})=0$.  Thus $\sum_{i=2}^n \pi_{i}=\sum_{i=2}^n\pi_{i-1}$ implying that $\pi_1=\pi_n$, a contradiction. Thus if $n$ is odd, $\delta(\pi)\ge \frac{n+1}{2}=\lceil n/2 \rceil$, and (\ref{eq:delta-lb}) is verified.
For example, with $n=5$, $\pi=(1,3,4,2,5)$ is a 1-Costas permutation with $D(\pi)=(2,1,-2,3)$ and hence $\delta(\pi)=3$.

We show how to construct $1$-Costas permutations attaining the lower bound (\ref{eq:delta-lb}) on the local variation. 
Let $k\ge 1$. If $k$ is even, say $k=2p$, we define  $\pi_{(k)} \in \mc{S}_k$  by
\[
   \pi_{(k)}=(p, p+1, p-1,p+2, p-2, p+3, p-3, \ldots, 1,k).
\]
If $k$ is odd, say $k=2p+1$, we define  $\pi_{(k)} \in \mc{S}_k$  by
\[
   \pi_{(k)}=(p+1, p+2, p,p+3, p-1, p+4, p-2, \ldots, k,1).
\]
Thus, the derivative of $\pi_{(k)}$ is 
\[
   D(\pi_{(k)})=(1,-2,3,-4, 5, -6, \ldots, \pm (k-1))
\]
where the signs alternate. 
Let
%
$\Pi_k=P_{\pi_{(k)}}$
%
be the permutation matrix corresponding to $\pi_{(k)}$. 
For instance, $\pi_{(4)}=(2,3,1,4)$ and  $\pi_{(5)}=(3,4,2,5,1)$, and 
\[
\Pi_4=
\left[
\begin{array}{c|c|c|c}
&1&&\\ \hline
&&1&\\ \hline
1&&&\\ \hline
&&&1
\end{array}
\right], \;\;
\Pi_5=
\left[
\begin{array}{c|c|c|c|c}
&&1&&\\ \hline
&&&1&\\ \hline
&1&&&\\ \hline
&&&&1\\ \hline
1&&&&
\end{array}
\right].
\]

Recall that $L_k$ is the backward identity matrix defined in the introduction. Observe that, for each permutation matrix $P \in \mc{S}_k$
\[
   D(L_kP)=-D(P).
\]
An example of a 1-Costas permutation attaining the lower bound above is 
$\pi=(3, 2, 4, 1, 5, 10, 6, 9, 7, 8)$ with derivative $D(\pi)=(-1, 2, -3, 4, 5, -4, 3, -2, 1)$. 
More generally we have the following proposition. In connection with the proof, it can be useful to consider the examples below.

\begin{theorem}
\label{thm:costas}
Let $n\ge 2$ be a positive integer. Then there exists a $1$-Costas permutation $\pi^* \in \mc{S}_n$ with $\delta(\pi^*)=\lceil \frac{n}{2} \rceil$, the smallest possible value.  Such a minimizing permutation $\pi^*$ depends on the parity of $n$ and is given by:

$(i)$ $n$ is even, say $n=2k$$:$  $\pi^*$ corresponds to the permutation matrix 
\begin{equation}
 \label{eq:min-delta_1}
    P_{\pi^*}=\Pi_k \oplus L_{k} \Pi_k  ,
\end{equation}

$(ii)$ $n$ is odd, and $n=2k+1$ with $k$ even$:$  $\pi^*$ corresponds to the permutation matrix 
\begin{equation}
 \label{eq:min-delta_2}
    P_{\pi^*}=
    \left[
    \begin{array}{cc}
       O &     L_{k+1}\Pi_{k+1} \\*[\smallskipamount]
       \Pi_k  & O
    \end{array}    
    \right], 
\end{equation}

$(iii)$ $n$ is odd, and $n=2k+1$ with $k$ odd$:$  $\pi^*$ corresponds to the permutation matrix 
\begin{equation}
 \label{eq:min-delta_3}
    P_{\pi^*}=
    \left[
    \begin{array}{cc}
       O &     L_{k+1}\Pi_{k+1}L_{k+1} \\*[\smallskipamount]
       \Pi_kL_{k}  & O
    \end{array}    
    \right].
\end{equation}

Moreover, in each case, $\pi^*$ attains the minimum value of $\Delta(\pi)$ for $1$-Costas permutations, and this minimum value is $\frac{n^2}{4}$ when $n$ is even, and 
$\frac{(n-1)^2}{4}+1$ when $n$ is odd.
\end{theorem}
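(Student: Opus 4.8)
The plan is to establish both extremal claims simultaneously, since for any permutation the quantities $\delta(\pi)=\max_i|D(\pi)_i|$ and $\Delta(\pi)=\|D(\pi)\|_1$ depend only on the \emph{multiset} of values of $D(\pi)$, and $\pi$ is $1$-Costas precisely when that multiset has no repetition. Thus everything reduces to computing the multiset $M$ of derivative values of $\pi^*$ in each of the three cases. I claim that $M=\{\pm1,\pm2,\ldots,\pm(k-1),\,k\}$ when $n=2k$, and $M=\{\pm1,\ldots,\pm(k-1),\,k,\,-(k+1)\}$ when $n=2k+1$. Granting this, the entries of $M$ are distinct (so $\pi^*$ is $1$-Costas), the largest modulus is $k=\lceil n/2\rceil$ in the even case and $k+1=\lceil n/2\rceil$ in the odd cases, and $\Delta(\pi^*)=\sum_{x\in M}|x|$ equals $k^2=\tfrac{n^2}{4}$ (even) and $k^2+k+1$ (odd).

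To compute $M$ I would proceed as follows. Each displayed matrix is a permutation matrix: \eqref{eq:min-delta_1} is the direct sum of the permutation matrices $\Pi_k$ and $L_k\Pi_k$, while in \eqref{eq:min-delta_2}--\eqref{eq:min-delta_3} the two nonzero blocks are permutation matrices occupying complementary rows and columns. The derivative of $\pi^*$ then consists of the within-block differences together with a single \emph{junction} difference across the block boundary. For the within-block part I use that $D(\pi_{(k)})=(1,-2,3,-4,\ldots,\pm(k-1))$, and that right (resp. left) multiplication by $L_\bullet$ negates (resp. negates and reverses) a permutation's difference vector, so that as a \emph{multiset} each factor $L_\bullet$ merely negates the block's difference values. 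In every case the two blocks carry opposite $L$-parity, so jointly they supply both $+j$ and $-j$ for each $j\le k-1$; in the odd cases the larger $(k+1)$-block contributes in addition one value of modulus $k$. A short evaluation of the boundary entries of $\pi_{(k)}$, $\pi_{(k+1)}$ and their $L_\bullet$-images shows that the junction difference is $k$ when $n=2k$ and $-(k+1)$ when $n=2k+1$, and that the additional modulus-$k$ value is $+k$; the extra factors $L_{k+1},L_k$ in \eqref{eq:min-delta_3} are precisely what is needed to realise these signs when $k$ is odd. Assembling the pieces yields the multiset $M$ claimed above.

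For the lower bounds, the inequality $\delta(\pi^*)\ge\lceil n/2\rceil$ is exactly \eqref{eq:delta-lb}, so the computation of $M$ shows it is attained. For $\Delta$, I would observe that the $n-1$ entries of $D(\pi)$ are distinct nonzero integers whose signed sum is $\pi_n-\pi_1\neq 0$; hence $\Delta(\pi)$ is at least the minimum $\ell_1$-norm of a set of $n-1$ distinct nonzero integers \emph{with nonzero total}. Counting the smallest available moduli gives this minimum directly: for $n=2k$ one takes $\{\pm1,\ldots,\pm(k-1)\}$ together with one of $\pm k$, of norm $k^2=\tfrac{n^2}{4}$; for $n=2k+1$ the absolutely smallest candidate $\{\pm1,\ldots,\pm k\}$ is excluded because it sums to $0$, which forces at least one extra unit of norm and yields $k^2+k+1$. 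Since $\sum_{x\in M}|x|$ equals these values, $\pi^*$ is a minimizer of $\Delta$ among $1$-Costas permutations.

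The step I expect to be the main obstacle is the junction-plus-parity bookkeeping: one must pin down the first and last entries of $\pi_{(k)}$ and $\pi_{(k+1)}$ (and of their reverses and complements under $L_\bullet$) accurately enough both to fix the junction value and, more delicately, to confirm that the two blocks never contribute a repeated value. This distinctness check is exactly where the three cases separate---the additional factors $L_{k+1}$ and $L_k$ in \eqref{eq:min-delta_3} are present solely to avert the value collision that otherwise arises for odd $k$---so verifying the $1$-Costas property, rather than either extremal estimate, is the genuinely delicate part of the argument.
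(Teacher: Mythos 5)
Your proposal is correct and follows essentially the same route as the paper: the paper's proof likewise verifies each case by writing out the derivative vector explicitly---the two blocks contribute sign-complementary alternating runs $\pm 1,\pm 2,\ldots,\pm(k-1)$ (plus the value $+k$ from the larger block when $n$ is odd), and the junction entry is $k$ in the even case and $-(k+1)$ in both odd cases---and it then obtains minimality of $\Delta$ from \eqref{eq:delta-lb}, whose own derivation is exactly the nonzero-sum observation ($\sum_i D(\pi)_i=\pi_n-\pi_1\neq 0$) that you invoke; your diagnosis that the extra factors $L_{k+1},L_k$ in \eqref{eq:min-delta_3} exist solely to flip the sign of the modulus-$k$ value and so avert a collision with the junction value $-k$ that would occur for odd $k$ is also accurate. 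One point where you depart from the stated theorem, and where you are right: for odd $n=2k+1$ your multiset $\{\pm 1,\ldots,\pm(k-1),\,k,\,-(k+1)\}$ gives $\Delta(\pi^*)=k^2+k+1=\frac{n^2+3}{4}$, not $\frac{(n-1)^2}{4}+1=k^2+1$ as displayed in the theorem. For instance, at $n=5$ the construction \eqref{eq:min-delta_2} yields $\pi^*=(3,5,4,1,2)$ with $D(\pi^*)=(2,-1,-3,1)$ and $\Delta=7$, whereas the displayed formula gives $5$; your lower-bound argument (any $2k$ distinct nonzero integers with nonzero total have $\ell_1$-norm at least $k^2+k+1$) shows $7$ is in fact optimal. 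So the displayed odd-case value is a misprint, and your computation matches what the paper's proof actually establishes; similarly, the paper's parenthetical claim that the even-case derivatives are $\{\pm 1,\pm 2,\ldots,\pm k\}$ lists $2k$ values where only $n-1=2k-1$ derivatives exist (only one of $\pm k$ occurs), and your multiset $\{\pm 1,\ldots,\pm(k-1),k\}$ is the precise statement.
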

\begin{proof}
 (i) Assume that $n$ is even, say $n=2k$ for some positive integer $k$.  Consider the matrix $P_{\pi^*}$ in (\ref{eq:min-delta_1}). If $k$ is even, the derivative of  $P_{\pi^*}$ is 
 \[
     D(\pi^*)=(1, -2, 3, -4, \ldots, k-1, k, -(k-1), k-2, \ldots, 2,-1).
 \]
 If $k$ is odd, the derivative of  $P_{\pi^*}$ is 
 \[
     D(\pi^*)=(1, -2, 3, -4, \ldots, -(k-1), k, k-1, -(k-2), \ldots, 2,-1).
 \]
 Thus, in either case, $\pi^*$ is $1$-Costas, and $\delta(\pi^*)=k=\lceil n/2\rceil$.
 
 (ii) Next, assume $n$ is odd and $n=2k+1$ for some even positive integer $k$.  Consider the matrix $P_{\pi^*}$ in (\ref{eq:min-delta_2}).  Then 
 \[
   D(\pi^*)_{k+1}=\pi^*_{k+2}-\pi^*_{k+1}=
    k/2  - (k+ k/2+1)=-(k+1), 
 \]
  so the derivative of  $P_{\pi^*}$ is 
 \[
     D(\pi^*)=(k, -(k-1),k-2,  \ldots, 2, -1, -(k+1), 1,-2, 3, -4, \ldots, k-1).
 \]
 Thus, $\pi^*$ is $1$-Costas and $\delta(\pi^*)=k+1=\lceil n/2\rceil$.
 
 (iii) Assume  $n=2k+1$ for some odd $k$, and consider the matrix $P_{\pi^*}$ in (\ref{eq:min-delta_3}).  Then 
 \[
   D(\pi^*)_{k+1}=\pi^*_{k+2}-\pi^*_{k+1}=
    (k+1)/2  - (k+ (k+1)/2+1)=-(k+1), 
 \]
  so the derivative of  $P_{\pi^*}$ is 
 \[
     D(\pi^*)=(k, -(k-1),k-2,  \ldots, 2, -1, -(k+1), 1,-2, 3, -4, \ldots, k-1).
 \]
 Thus,  $\pi^*$ is $1$-Costas and $\delta(\pi^*)=k+1=\lceil n/2\rceil$.

 Finally, we consider $\Delta(\pi^*)$. When $n=2k$ is even, the derivatives are
 \[
     \{\pm 1, \pm 2, \ldots, \pm k\}
 \]
  so clearly the minimum value of $\Delta(\pi)$ is then attained among $1$-Costas permutations. 
  When $n=2k+1$ is odd, the derivatives are
 \[
     \{\pm 1, \pm 2, \ldots, \pm (k-1), k, -(k+1)\}
 \]
  which is the minimum value of $\Delta(\pi)$  among $1$-Costas permutations as some derivative is at least $k+1$ in absolute value due to (\ref{eq:delta-lb}). 
 \end{proof}

We now give three examples illustrating the three cases in Theorem \ref{thm:costas}.

\begin{example}
 {\rm 
 Let $n=12$, so $\lceil n/2\rceil=6$. Then  the matrix $P_{\pi^*}$ in (\ref{eq:min-delta_1}) is 
\[\left[\begin{array}{c|c|c|c|c|c||c|c|c|c|c|c}
&&1&&&&&&&&&\\ \hline
&&&1&&&&&&&&\\ \hline
&1&&&&&&&&&&\\ \hline
&&&&1&&&&&&&\\ \hline
1&&&&&&&&&&&\\ \hline
&&&&&1&&&&&&\\ \hline\hline
&&&&&&&&&&&1\\ \hline
&&&&&&1&&&&&\\ \hline
&&&&&&&&&&1&\\ \hline
&&&&&&&1&&&&\\ \hline
&&&&&&&&&1&&\\ \hline
&&&&&&&&1&&&\end{array}\right]=\left[\begin{array}{cc} A_1&O\\O&A_2\end{array}\right],\]
where the rows of $A_2$ are in the reverse order of those of $A_1$.
The derivative is computed as $D(\pi^*)=(1,-2,3,-4,5,6,-5,4,-3,2,-1)$, and  $\delta(\pi^*)=6$.
} \endproof
\end{example}

\begin{example}
 {\rm 
 Let $n=13$, so $\lceil n/2\rceil=7$. Then  the matrix $P_{\pi^*}$ in (\ref{eq:min-delta_2}) is 
\[\left[\begin{array}{c|c|c|c|c|c||c|c|c|c|c|c|c}
&&&&&&1&&&&&&\\ \hline
&&&&&&&&&&&&1\\ \hline
&&&&&&&1&&&&&\\ \hline
&&&&&&&&&&&1&\\ \hline
&&&&&&&&1&&&&\\ \hline
&&&&&&&&&&1&&\\ \hline
&&&&&&&&&1&&&\\ \hline \hline
&&1&&&&&&&&&&\\ \hline
&&&1&&&&&&&&&\\ \hline
&1&&&&&&&&&&&\\ \hline
&&&&1&&&&&&&&\\ \hline
1&&&&&&&&&&&&\\ \hline
&&&&&1&&&&&&&\\ 
\end{array}\right]=\left[\begin{array}{cc} O & A_1\\A_2 &O\end{array}\right].\]
The derivative is computed as $D(\pi^*)=(6,-5,4,-3,3,-1,-7, 1, -2,3,-4,5)$, and $\delta(\pi^*)=7$.
} \endproof
\end{example}

\begin{example}
 {\rm 
 Let $n=11$, so $\lceil n/2\rceil=6$. Then  the matrix $P_{\pi^*}$ in (\ref{eq:min-delta_3}) is 
\[\left[\begin{array}{c|c|c|c|c||c|c|c|c|c|c}
&&&&&1&&&&&\\ \hline
&&&&&&&&&&1\\ \hline
&&&&&&1&&&&\\ \hline
&&&&&&&&&1&\\ \hline
&&&&&&&1&&&\\ \hline
&&&&&&&&1&&\\ \hline \hline
&&1&&&&&&&&\\ \hline
&1&&&&&&&&&\\ \hline
&&&1&&&&&&&\\ \hline
1&&&&&&&&&&\\ \hline
&&&&1&&&&&
\end{array}\right]=\left[\begin{array}{cc} O & A_1\\A_2 &O\end{array}\right].\]
The derivative is computed as $D(\pi^*)=(5,-4,3,-2,1,-6,-1,2,-3,4)$, and  $\delta(\pi^*)=6$.
} \endproof
\end{example}

We now determine the extreme values of the {\em global variation} for general permutations.
Clearly, $\min_{\pi \in \mc{S}_n} \Delta(\pi)=n-1$ and this minimum is attained only for the identity and the anti-identity permutations. The problem of maximizing $\Delta(\pi)$ is more complex.  
Define
\[
    \Delta^*_n=\max\{\Delta(\pi): \pi \in \mc{S}_n\}.
\]
It is convenient to treat the even and odd cases separately.

Let $n$ be even, say $n=2k$, and let $\pi=(\pi_1, \pi_2, \ldots, \pi_n)$ be a permutation. We say that $\pi$ is {\em mid-alternating} if for all $i<n$ the consecutive entries of $\pi$ satisfy either (i) $\pi_i \le k$, $\pi_{i+1} \ge k+1$, or (ii) $\pi_i \ge k+1$, $\pi_{i+1} \le k$.

\begin{example}
\label{ex:Delta-max1}
{\rm Let $n=8$, so $k=4$. The permutation $\pi=(4,6,2,7,3,8,1,5)$ is mid-alternating, and the corresponding permutation matrix is 
\[
 P= 
 \left[
  \begin{array}{l|l|l|l||l|l|l|l}
   &&&1&&&& \\ \hline
   &&&&&1&& \\ \hline
   &1&&&&&& \\ \hline
   &&&&&&1& \\ \hline
   &&1&&&&& \\ \hline
   &&&&&&&1 \\ \hline
   1&&&&&&& \\ \hline
   &&&&1&&& 
 \end{array}
 \right].\]
 Of two 1's in consecutive rows, one is to the left of the double-vertical line and one is to the right.
 \endproof
}
\end{example}

\begin{theorem}
\label{thm:extreme-Delta-even}
 Let $n$ be even,  and let $\pi =(\pi_1, \pi_2, \ldots, \pi_n)\in \mc{S}_n$. Then $\Delta(\pi)=\Delta^*_n$ if and only if $\pi$ is mid-alternating and  
 $\{\pi_1, \pi_n\}=\{\frac{n}{2},\frac{n}{2}+1\}$. 
   Moreover, $\Delta_n^*=(n^2-2)/2$. 
 \end{theorem}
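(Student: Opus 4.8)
The plan is to prove the upper bound by a threshold (``layer-cake'') decomposition of $\Delta(\pi)$ and then to read off the equality conditions from that same decomposition. For integers $a,b$ one has $|a-b|=\#\{t\in\mb{Z}:\min(a,b)\le t<\max(a,b)\}$, so summing over consecutive pairs gives $\Delta(\pi)=\sum_{t=1}^{n-1}X_t$, where $X_t$ is the number of indices $i$ for which exactly one of $\pi_i,\pi_{i+1}$ exceeds $t$. First I would fix $t$ and encode $\pi$ by the binary word $w^{(t)}=(b_1,\dots,b_n)$ with $b_i=1$ if $\pi_i>t$ and $b_i=0$ otherwise; this word has $n-t$ ones and $t$ zeros, and $X_t$ is precisely its number of adjacent unequal pairs.

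The key elementary lemma I would establish is: a binary word of length $n$ with $m$ copies of the minority symbol has at most $2m$ adjacent unequal pairs when $m<n-m$, with equality iff every minority symbol is interior and isolated; and at most $n-1$ adjacent unequal pairs when $m=n-m$, with equality iff the word strictly alternates. Writing $n=2k$ and applying this with minority count $\min(t,n-t)$, I get $X_t\le 2\min(t,n-t)$ for $t\ne k$ and $X_k\le 2k-1$. Summing, $\sum_{t=1}^{n-1}2\min(t,n-t)=2k^2$, and the loss of exactly $1$ at the middle level $t=k$ yields $\Delta(\pi)\le 2k^2-1=(n^2-2)/2$, establishing $\Delta_n^*\le (n^2-2)/2$.

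For the characterization, equality $\Delta(\pi)=\Delta_n^*$ forces $X_t$ to be maximal for every $t$. The condition at the middle level $t=k$ says that $w^{(k)}$ strictly alternates, i.e.\ consecutive entries of $\pi$ lie on opposite sides of the split $\{1,\dots,k\}\mid\{k+1,\dots,n\}$ --- this is exactly that $\pi$ is mid-alternating. The conditions at $t=k-1$ and $t=k+1$ each say that the $k-1$ minority values must all be interior: at $t=k-1$ no value $\le k-1$ may sit at position $1$ or $n$, forcing $\pi_1,\pi_n\ge k$; at $t=k+1$ no value $\ge k+2$ may sit at an endpoint, forcing $\pi_1,\pi_n\le k+1$. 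Together with distinctness this gives $\{\pi_1,\pi_n\}=\{k,k+1\}$. Conversely, I would check directly that if $\pi$ is mid-alternating with $\{\pi_1,\pi_n\}=\{k,k+1\}$ then every $X_t$ is maximal: the low and high values occupy alternate positions, so no two low (and no two high) values are adjacent, and since both endpoints equal $k$ or $k+1$, every value $\le t$ (for $t<k$) and every value $>t$ (for $t>k$) is interior and isolated, giving $X_t=2\min(t,n-t)$, while $X_k=2k-1$; summing recovers $2k^2-1$.

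I expect the main obstacle to be the equality bookkeeping rather than the bound itself: one must prove the extremal part of the binary-word lemma precisely (isolated-and-interior for a strict minority, strict alternation for a balanced word) and then correctly translate ``all minority symbols interior'' at the two levels $t=k\pm1$ into the endpoint constraint. The converse direction also requires checking that the freedom in ordering the low values among themselves, and the high values among themselves, never destroys maximality of any $X_t$; this follows once one observes that mid-alternation already isolates every low value from every other low value and likewise for highs.
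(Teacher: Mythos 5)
Your proof is correct, but it takes a genuinely different route from the paper. You decompose $\Delta(\pi)=\sum_{t=1}^{n-1}X_t$ by thresholds (a layer-cake identity on $|a-b|$), reduce each level to a binary-word lemma (a word with $m<n-m$ minority symbols has at most $2m$ adjacent unequal pairs, with equality iff every minority symbol is interior and isolated; a balanced word has at most $n-1$, with equality iff it strictly alternates), and then read off mid-alternation from the balanced level $t=n/2$ and the endpoint condition $\{\pi_1,\pi_n\}=\{\frac n2,\frac n2+1\}$ from the two levels $t=\frac n2\pm 1$. The paper instead rewrites $\Delta(\pi)$ by signs of the differences, as $2\sum_{i\in V_+}\pi_i-2\sum_{i\in V_-}\pi_i+s_1\pi_1+s_n\pi_n$ where $V_+$, $V_-$ are the interior local maxima and minima, bounds $|V_\pm|\le n/2-1$, and maximizes by placing the large values at peaks and the small values at valleys, obtaining the same value $\alpha-\beta=(n^2-2)/2$. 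The two decompositions are dual in flavor: the paper counts each \emph{value} with a multiplicity $\pm 2,\pm 1,0$ determined by its local shape, while you count each \emph{level} with a multiplicity bounded by the alternation structure. Your approach buys a crisper equality analysis — the necessity of both conditions drops out of exactly three levels, and you verify sufficiency explicitly (the paper's sufficiency direction is left implicit in its bounding argument), and the same level-set lemma adapts with no new ideas to the odd case (Theorem \ref{thm:extreme-Delta-odd}), where the paper omits the proof; the paper's approach, in turn, makes the numerical value $\alpha-\beta$ transparent as an assignment of values to peaks and valleys. One trivial remark: for $n=2$ the levels $t=\frac n2\pm1$ do not exist, but there the endpoint condition $\{\pi_1,\pi_2\}=\{1,2\}$ holds vacuously, so your argument is complete as stated for all even $n\ge 4$ and trivially for $n=2$.
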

\begin{proof}
 Let $\pi=(\pi_1, \pi_2, \ldots, \pi_n)$ be a permutation. In the expression for $\Delta(\pi)$ we replace each $|\pi_{i+1}-\pi_i|$ by $(\pi_{i+1}-\pi_i)$ times the sign of this difference. This gives 
  %
 \begin{equation}
  \label{eq:decomp}
   \Delta(\pi)=\sum_{i=1}^{n-1} |\pi_{i+1}-\pi_{i}|= 
   2\sum_{i \in V_+} \pi_{i}+0\sum_{i \in V_0} \pi_{i} -2\sum_{i \in V_-} \pi_{i} +s_1\pi_1+s_n\pi_n
 \end{equation}
 where $V_+$ (resp. $V_-$; $V_0$) are those $i \in \{2, 3, \ldots, n-1\}$ such that $\pi_i$ is larger than both $\pi_{i-1} $ and $\pi_{i+1}$ (resp., smaller; in between), and $s_1, s_n = \pm 1$.  
 
 Note that $|V_+| \le n/2-1$ as $i$ and $i+1$ cannot both belong to $V_+$. Similarly, $|V_-| \le n/2-1$. 
  It therefore follows from (\ref{eq:decomp}) that an upper bound on  $\Delta(\pi)$  is the sum of twice the $(n/2-1)$  largest integers in $\{1,2,\ldots,n\}$ with  the next largest integer $(n/2+1)$, and subtracting twice the sum of the $(n/2-1)$ smallest integers in $\{1,2,\ldots,n\}$ with the next smallest integer $(n/2)$. In fact, here we replace  some of the zeros in (\ref{eq:decomp}) corresponding to $V_0$ by the difference of $k$th largest and the $k$th smallest number in $\{1, 2, \ldots, n\}$, where $k<n/2$, so this difference is positive. 
   Thus $\Delta(\pi)$ is bounded by $\alpha-\beta$ where
\begin{equation}
\alpha=
2\left( n+(n-1)+\cdots+\left(\frac{n}{2}+2\right)\right) +\left(\frac{n}{2}+1\right), 
\end{equation}
\begin{equation}
\beta=2\left( 1+2+\cdots+\left(\frac{n}{2}-1\right)\right)+\frac{n}{2}.
\end{equation}
Note, for clarity,  that because of the factor of 2 in both $\alpha$ and $\beta$, each is the sum of $(n-1)$ integers taken from $\{1,2,\ldots,n\}$
An elementary computation gives that 
\[
 \alpha-\beta=\frac{n^2-2}{2}, 
 \]
and, as $\pi$ was arbitrary,  we have shown that $\Delta_n^*\le (n^2-2)/2$. 

In order that $\Delta(\pi) =(n^2-2)/2$, for a specific permutation $\pi$, it  follows from our bounding argument that $V_0=\emptyset$, and therefore the signs of the derivatives must alternate (as consecutive entries cannot both be 2, or both be $-2$). 
Moreover, each of $\pi_i$ ($i \in V_+$)  must be more that $n/2$ and each of $\pi_i$ ($i \in V_-$) must be less than or equal to $n/2$. In addition, since it is $\pi_1$ and $\pi_n$ that enter only once in the computation of $\Delta(\pi)$, we must have $\{\pi_1,\pi_n\}=\{n/2,n/2+1\}$ where, if $\pi_1=n/2$, then $\pi_2>n/2$, then $\pi_3<n/2$ and so forth, while if $\pi_1=n/2+1$, then $\pi_2<n/2+1$, then $\pi_3>n/2+1$, and so forth, that is, $\pi$ must be mid-alternating. 
\end{proof}

The permutation $\pi$ in Example \ref{ex:Delta-max1} satisfies the conditions of Theorem \ref{thm:extreme-Delta-even}, so $\Delta(\pi)=\Delta^*_8=31$.

We turn to the case when $n$ is odd, say $n=2k+1$,  and let $\pi=(\pi_1, \pi_2, \ldots, \pi_n)$ be a permutation. We say that $\pi$ is {\em mid-alternating} if for all $i<n$ the consecutive entries of $\pi$ satisfy either (i) $\pi_i \le k+1$, $\pi_{i+1} \ge k+1$, or (ii) $\pi_i \ge k+1$, $\pi_{i+1} \le k+1$.

The following result may be shown using the same type of arguments as in the proof of Theorem \ref{thm:extreme-Delta-even}, so we therefore omit the proof. 

\begin{theorem}
\label{thm:extreme-Delta-odd}
 Let $n$ be odd, 
 and let $\pi =(\pi_1, \pi_2, \ldots, \pi_n)\in \mc{S}_n$. Then $\Delta(\pi)=\Delta^*_n$ if and only if $\pi$ is mid-alternating and  
 $\{\pi_1, \pi_n\}$ equals either $\{\frac{n-1}{2},\frac{n+1}{2}\}$ or $\{\frac{n+1}{2}, \frac{n+3}{2}\}$.
   Moreover, $\Delta_n^*=\frac{3n^2-6n-13}{4}$.
 \end{theorem}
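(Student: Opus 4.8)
The plan is to mirror the proof of Theorem~\ref{thm:extreme-Delta-even} almost verbatim, since the sign decomposition~(\ref{eq:decomp}) is insensitive to the parity of $n$: writing $\Delta(\pi)=2\sum_{i\in V_+}\pi_i-2\sum_{i\in V_-}\pi_i+s_1\pi_1+s_n\pi_n$, the only data that change are the admissible sizes of $V_+$ and $V_-$. With $n=2k+1$ there are $n-2=2k-1$ interior positions, and since no two consecutive interior positions can both be local maxima (nor both local minima), I get $|V_+|\le k$ and $|V_-|\le k$. The first thing I would record is the crucial parity obstruction: unlike the even case, one cannot have $|V_+|=|V_-|$ at their common maximum, because $|V_+|+|V_-|\le 2k-1$ forces the two sizes to differ as soon as one of them equals $k$. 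This is the essential new feature of the odd case, and I expect it to be the main obstacle, as it splits the extremal analysis into two non-symmetric families rather than the single balanced family of Theorem~\ref{thm:extreme-Delta-even}.

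Next I would run the rearrangement bound. Because the coefficient multiset sums to $0$, for a fixed multiset $\Delta(\pi)=\sum_j c_j\pi_j$ is maximized by assigning the largest values of $\{1,\dots,n\}$ to the largest coefficients. Pushing $|V_+|$ and $|V_-|$ as large as the parity constraint allows yields exactly two optimal patterns: (A) $|V_+|=k$, $|V_-|=k-1$, with both endpoint signs $s_1=s_n=-1$; and (B) $|V_+|=k-1$, $|V_-|=k$, with both $s_1=s_n=+1$. In (A) the two endpoint positions (coefficient $-1$) receive the values $\{k,k+1\}=\{\frac{n-1}{2},\frac{n+1}{2}\}$ just below the top block, while in (B) the endpoint positions (coefficient $+1$) receive $\{k+1,k+2\}=\{\frac{n+1}{2},\frac{n+3}{2}\}$. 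I would then check that the only remaining configuration compatible with $\sum_j c_j=0$, namely one endpoint $+1$ and one $-1$, forces $|V_+|=|V_-|$ together with at least one index in $V_0$ (a wasted coefficient $0$ on a middle value); a direct count shows this loses exactly $1$ relative to (A) and (B) and so is never optimal.

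Finally I would carry out the equality analysis and the arithmetic tally, just as in the even case. Optimality forces $V_0=\emptyset$, so the derivative signs alternate; every $\pi_i$ with $i\in V_+$ then exceeds the middle value $k+1$ and every $\pi_i$ with $i\in V_-$ is below it, which together with the endpoint values identified above is precisely the mid-alternating condition with $\{\pi_1,\pi_n\}$ equal to one of the two stated pairs. Conversely, any mid-alternating permutation with one of those endpoint pairs realizes pattern (A) or (B) with the optimal value assignment, so it attains the bound; this gives the ``if'' direction. The tally of twice the top block minus twice the bottom block, adjusted for the single middle value left over in the odd case, then evaluates to $\Delta_n^*=\frac{n^2-3}{2}$, the odd analogue of the even value $\frac{n^2-2}{2}$. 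The only care needed is the bookkeeping of the off-by-one in the peak/valley counts, so that the two families are shown to tie and the mixed-sign family is shown to fall short.
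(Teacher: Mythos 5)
Your proposal is correct, and it follows exactly the route the paper intends: the paper gives no proof of Theorem~\ref{thm:extreme-Delta-odd}, saying only that it ``may be shown using the same type of arguments as in the proof of Theorem~\ref{thm:extreme-Delta-even}'', and your adaptation of the decomposition~(\ref{eq:decomp}) --- the parity obstruction $|V_+|+|V_-|\le 2k-1$ splitting the extremum into the two tying families (A) and (B) with endpoint values $\{k,k+1\}$ and $\{k+1,k+2\}$, the mixed-endpoint family forced to carry a wasted zero coefficient and falling short by exactly $1$, and the equality analysis yielding the mid-alternating condition --- is precisely that argument, carried out soundly.

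One substantive point must be flagged, though it is a defect of the statement rather than of your proof: your tally $\Delta_n^*=\frac{n^2-3}{2}=2k^2+2k-1$ is correct, and the value $\frac{3n^2-6n-13}{4}$ printed in the theorem is a misprint. The two expressions agree exactly when $3n^2-6n-13=2(n^2-3)$, i.e.\ $(n-7)(n+1)=0$, so only at $n=7$ --- which happens to be the paper's sole worked example ($\Delta^*_7=23$), explaining how the error could pass unnoticed. A direct check confirms your formula: for $n=5$ the mid-alternating permutation $(3,5,1,4,2)$, with $\{\pi_1,\pi_5\}=\{2,3\}=\{\frac{n-1}{2},\frac{n+1}{2}\}$, has $\Delta=2+4+3+2=11=\frac{25-3}{2}$, whereas the printed formula gives $8$ (and at $n=3$ it gives $-1$). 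So do not adjust your arithmetic to match the statement; with the constant corrected to $\frac{n^2-3}{2}$, your proof is complete and the endpoint characterization in the theorem stands as printed.
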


\begin{example}
{\rm The permutation $\pi=(4,5,2,7,1,6,3)$ satisfies the conditions of Theorem \ref{thm:extreme-Delta-odd}, and $\Delta(\pi)=\Delta^*_7=23$. The corresponding permutation matrix is 
\[
 P= 
 \left[
  \begin{array}{l|l|l||l||l|l|l}
   &&&1&&& \\ \hline
   &&&&1&& \\ \hline
   &1&&&&& \\ \hline
   &&&&&&1 \\ \hline
   1&&&&&& \\ \hline
   &&&&&1& \\ \hline
   &&1&&&& \\ 
 \end{array}
 \right].
\] \endproof
}
\end{example}


\section{Other properties of the discrete derivative}
\label{sec:other}

  We observe  that 
  $\min_{\pi \in \mc{S}_n} \,\max_i |D(\pi)_i| =1$,  and this minimum is attained for the identity and anti-identity permutations. Now we determine $\max_{\pi \in \mc{S}_n} \,\min_i |D(\pi)_i|$.

\begin{theorem}
 \label{thm:minmax1}
 \begin{equation}
 \label{eq:minmax1}
    \max_{\pi \in \mc{S}_n} \,\min_i |D(\pi)_i|  =\lfloor n/2 \rfloor.
 \end{equation}
\end{theorem}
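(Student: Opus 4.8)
To prove (\ref{eq:minmax1}), write $m(\pi)=\min_i|D(\pi)_i|$. The plan is to establish the two inequalities separately: first that $m(\pi)\le\lfloor n/2\rfloor$ for \emph{every} $\pi\in\mc{S}_n$, and then to exhibit, for each $n$, a permutation attaining $m(\pi)=\lfloor n/2\rfloor$. The single elementary fact I would use repeatedly is that if two entries of $\pi$ occupy consecutive positions then their difference has absolute value at least $m(\pi)$; equivalently, any collection of values that are pairwise within $m(\pi)-1$ of each other must occupy pairwise non-consecutive positions.

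For the upper bound I would fix $\pi$ and suppose, for contradiction, that $m:=m(\pi)\ge\lfloor n/2\rfloor+1$. The $\lfloor n/2\rfloor+1$ smallest values $\{1,2,\ldots,\lfloor n/2\rfloor+1\}$ differ pairwise by at most $\lfloor n/2\rfloor<m$, so no two of them can sit in consecutive positions. When $n=2k$ is even this is already impossible, since a subset of $\{1,\ldots,n\}$ with no two consecutive elements has size at most $k=\lfloor n/2\rfloor$, yet we have $k+1$ such values. When $n=2k+1$ is odd the count just barely fits, and this is the delicate step: a set of $k+1$ pairwise non-consecutive positions in $\{1,\ldots,2k+1\}$ must be \emph{exactly} the odd positions $\{1,3,\ldots,2k+1\}$, the unique maximum such set. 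The same reasoning applied to the $k+1$ largest values $\{k+1,\ldots,2k+1\}$ forces them onto the odd positions as well. But then both value-sets would equal the set of entries occupying the odd positions, forcing $\{1,\ldots,k+1\}=\{k+1,\ldots,2k+1\}$, which fails for $k\ge1$. This contradiction yields $m\le\lfloor n/2\rfloor$ in all cases.

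For the construction I would treat the two parities separately. For $n=2k+1$ odd, set $a=k=\lfloor n/2\rfloor$ and $b=k+1=\lceil n/2\rceil$; these are relatively prime with $a+b=n$, so Theorem~\ref{thm:char-D2} provides a permutation in $\mc{S}_n$ realizing the $D$-pair $(a,-b)$. All of its derivative values are $k$ or $-(k+1)$, whence $m(\pi)=k=\lfloor n/2\rfloor$. For $n=2k$ even I would take the explicit interleaving
\[
   \pi=(k,\,2k,\,k-1,\,2k-1,\,\ldots,\,1,\,k+1),
\]
that is, $\pi_{2j-1}=k-j+1$ and $\pi_{2j}=2k-j+1$ for $j=1,\ldots,k$. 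The odd positions then receive $\{1,\ldots,k\}$ and the even positions receive $\{k+1,\ldots,2k\}$, so $\pi\in\mc{S}_n$, and a direct computation gives $D(\pi)=(k,-(k+1),k,-(k+1),\ldots,k)$, so every derivative has absolute value $k$ or $k+1$ and hence $m(\pi)=k=\lfloor n/2\rfloor$.

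The routine parts are the construction (verifying the two displayed derivatives) and the even half of the upper bound, which is an immediate counting statement. The genuine obstacle is the odd case of the upper bound: a single family of small values does \emph{not} produce a contradiction because $k+1$ pairwise non-consecutive positions do exist in a path on $2k+1$ vertices. Overcoming this requires noticing that such an extremal position set is unique, and then playing the smallest and largest value-families against each other. Combining the bound with the two constructions establishes (\ref{eq:minmax1}).
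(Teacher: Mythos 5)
Your proposal is correct, but your upper-bound argument takes a genuinely different route from the paper's. The paper argues locally: with $k=\lfloor n/2\rfloor$, the middle value $k+1$ is within distance $k$ of every element of $\{1,\ldots,n\}$ (since $\max(k,\,n-k-1)\le k$ for both parities), so whichever position carries the value $k+1$ has an adjacent position whose derivative has absolute value at most $k$ --- a two-line pigeonhole that needs no case split. You argue globally: if every derivative exceeded $k$ in absolute value, the $k+1$ smallest values would have to sit at pairwise non-consecutive positions, which for even $n$ is an immediate counting contradiction, while for odd $n$ you need the extra observation that the unique independent set of size $k+1$ in the path on $2k+1$ vertices is the set of odd positions, and you then play the smallest and largest value-halves against each other. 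Your route is longer and the odd case is more delicate, but it yields extra structural information (the positions of both extreme halves are completely forced), whereas the paper's argument is more economical. On the attainment side the two proofs essentially coincide: your even-$n$ permutation $(k,2k,k-1,2k-1,\ldots,1,k+1)$ is a minor variant of the paper's explicit interleaving $\pi^{(n)}=(k+1,1,k+2,2,\ldots,n,k)$, both with derivatives alternating between $k$ and $k+1$ in absolute value; for odd $n$ the paper again writes an explicit interleaving, while you instead invoke Theorem \ref{thm:char-D2} with $(a,b)=(k,k+1)$, a tidy reuse of the $D$-pair machinery --- and this application is safe for all odd $n\ge 3$, since even in the boundary case $k=1$ that theorem's separate $a=1$ construction produces a permutation in $\mc{S}_{b+1}=\mc{S}_3=\mc{S}_n$.
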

\begin{proof}
Define $\hat{D}^{(n)}:=     \max_{\pi \in \mc{S}_n} \,\min_i |D(\pi)_i|$. 
We initially prove that $\hat{D}^{(n)} \le \lfloor n/2 \rfloor$.

Let $\pi \in \mc{S}_n$, and define $k=\lfloor n/2 \rfloor$.  Assume $\min_i |D(\pi)_i| \ge k+1$. Let $P$ be the permutation matrix corresponding to $\pi$.  Let $i$ be the row of the unique 1 in column $k+1$. Then $i$ has at least one adjacent row, say it  is row $i-1$  (the argument is similar if it is row $i+1$, or both). Row $i-1$ has a unique 1, say in column $p$. But then
\[
   |D(\pi)_{i-1}|=|\pi_{i}-\pi_{i-1}| =|k+1-p|\le k,
\]
a contradiction.  

Therefore $\min_i |D(\pi)_i| \le k=\lfloor n/2 \rfloor$ and 
\[
\hat{D}^{(n)}=\max_{\pi \in \mc{S}_n} \,\min_i |D(\pi)_i| \le \lfloor n/2 \rfloor.
\]
It remains to construct a permutation $\pi \in \mc{S}_n$ with 
$\min_i |D(\pi)_i| = \lfloor n/2 \rfloor$. 

If $n$ is even, say $n=2k$, let 
\[ 
  \pi^{(n)}=(k+1, 1, k+2, 2, k+3, 3, \ldots, n, k).
\]
Then $D(\pi^{(n)})=(k,k+1, k, k+1, \ldots, k)$, so $\min_i |D(\pi)_i| = k$, as desired. 

If $n$ is odd, say $n=2k+1$, let 
\[ 
  \pi^{(n)}=(1, k+1, 2, k+2, 3, k+3, \ldots, k-1, n-1,k+1).
\]
Then $D(\pi^{(n)})=(k+1, k, k+1, k, \ldots, k,k+1)$, so $\min_i |D(\pi)_i| = k$, as desired. 
\end{proof}

Let $P^{(n)}$ be the permutation matrix corresponding to the extreme permutation $\pi^{(n)}$ constructed in the proof of Theorem \ref{thm:minmax1}. Note that when $n$ is even 
\[
   P^{(n+1)}=J_1 \oplus P^{(n)}.
\]
\begin{example} 
{\rm 
The extreme permutation matrices $P^{(6)}$ and $P^{(7)}$
are given by
 \[
 P^{(6)}=
\left[
\begin{array}{c|c|c|c|c|c}
&&&1&&\\ \hline
1&&&&&\\ \hline
&&&&1&\\ \hline
&1&&&&\\ \hline
&&&&&1\\ \hline
&&1&&&\\ 
\end{array}
\right] \;\mbox{\rm and}\;
P_2=
\left[
\begin{array}{c|c|c|c|c|c|c}
1&&&&&&\\ \hline
&&&&1&&\\ \hline
&1&&&&&\\ \hline
&&&&&1&\\ \hline
&&1&&&&\\ \hline
&&&&&&1\\ \hline
&&&1&&&\\ 
\end{array}
\right].
\]
Here $\hat{D}^{(6)}=\hat{D}^{(7)}=3$. The permutation in Example \ref{ex:first} also attains $\hat{D}^{(7)}$. 
 }\endproof
 \end{example}

 Next we discuss an analogue of convexity for the discrete derivative.
 We say that a permutation $\pi=(\pi_1,\pi_2,\ldots,\pi_n) \in \mc{S}_n$ and its corresponding permutation matrix $P=P(\pi)$ are {\em convex} provided its derivatives are increasing, i.e., 
 \begin{equation}
  \label{eq:convex}
  \pi_2-\pi_1 \le \pi_3-\pi_2 \le \cdots \le \pi_n-\pi_{n-1}.
 \end{equation}
 This is equivalent to 
 \[
   \pi_i \le \frac{1}{2}(\pi_{i-1}+\pi_{i+1}) \;\;(2 \le i \le n-1). 
 \]

 For instance, both the identity matrix and the backward identity matrix are convex. 
 A class of convex permutation matrices are obtained by a modification of the matrix $\Pi_k$ defined before Proposition \ref{thm:costas}. Let $\Pi^*_k$ be obtained from $\Pi_k$ by a plane rotation of the matrix by a counter-clockwise rotation of $90$ degrees.  For example,
 \[\Pi^*_6=
 \left[\begin{array}{c|c|c|c|c|c}
 &&&&&1\\ \hline
 &&&1&&\\ \hline
 &1&&&&\\ \hline
 1&&&&&\\ \hline
 &&1&&&\\ \hline
 &&&&1&\end{array}\right]\]
 which corresponds to the permutation $(6,4,2,1,3,5)$ with derivative $(-2,-2,-1,2,2)$. Then we see that $\Pi^*_k$ is a convex permutation matrix for every $k$.
  
 Let $P=[p_{ij}]$ be a $n \times n$   subpermutation matrix, i.e., a $(0,1)$-matrix with at most one 1 in every row and column. If $P$ contains a total of $k$ 1's, then $P$ corresponds to a subsequence $(i_1,i_2,\ldots,i_k)$ of a permutation of  $\{1,2,\ldots,n\}$.   Define 
$I_k(P)=\{i: p_{ij}=1 \;\mbox{\rm for some $j \le k$} \}$, the set of rows containing a 1 in the first $k$ columns. An {\em interval} in a set $\{1, 2, \ldots, n\}$ is a set of consecutive integers $I=\{k, k+1, \ldots, l\}$ for some $1 \le k \le l \le n$, and its {\em length} is $|I|=l-k+1$.

\begin{lemma}
 \label{lem:convex-int}
If $P$ is a convex permutation matrix of order $n$, then $I_k(P)$ is an interval of length $k$ for each $k \le n$.
\end{lemma}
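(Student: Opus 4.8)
The plan is to translate the column statement into a statement about the values $\pi_i$, and then exploit the fact that a convex permutation is ``valley-shaped'': it strictly decreases to a single minimum and then strictly increases. First I would record the reformulation. Since the unique $1$ in row $i$ sits in column $\pi_i$, we have
\[
   I_k(P)=\{i : \pi_i \le k\}.
\]
As $\pi$ is a permutation, the entries of $\pi$ that are at most $k$ are exactly $1,2,\ldots,k$, so $|I_k(P)|=k$ holds automatically; the whole content of the lemma is therefore the claim that these rows form a set of \emph{consecutive} integers.

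The key structural step is to prove that a convex $\pi$ is unimodal in the valley sense. Writing $d_i=\pi_{i+1}-\pi_i$, convexity (\ref{eq:convex}) says precisely that $d_1 \le d_2 \le \cdots \le d_{n-1}$, and since the entries of a permutation are distinct we have $d_i \ne 0$ for every $i$. Hence, letting $m-1$ be the number of indices $i$ with $d_i<0$, the monotonicity of the $d_i$ gives $d_i<0$ for $i<m$ and $d_i>0$ for $i\ge m$. Equivalently, $\pi_1 > \pi_2 > \cdots > \pi_m$ and $\pi_m < \pi_{m+1} < \cdots < \pi_n$, so $\pi$ strictly decreases on $\{1,\ldots,m\}$ and strictly increases on $\{m,\ldots,n\}$, with its unique minimum (the value $1$) at position $m$. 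The extreme cases $m=1$ and $m=n$ recover the identity and anti-identity permutations.

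Finally I would assemble the interval claim from this valley shape by splitting $I_k(P)$ according to the two monotone branches. On the decreasing branch $\{1,\ldots,m\}$ the condition $\pi_i \le k$ cuts out a suffix $\{a,a+1,\ldots,m\}$, since once $\pi_i$ has dropped to $\le k$ it stays $\le k$; on the increasing branch $\{m,\ldots,n\}$ it cuts out a prefix $\{m,m+1,\ldots,b\}$. Their union is $\{a,a+1,\ldots,b\}$, an interval containing $m$. Combined with the count $|I_k(P)|=k$, this is exactly an interval of length $k$, as claimed.

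The main obstacle is the structural fact that convex permutations are valley-shaped; once that is in hand, the splitting argument is routine. The only care needed is to phrase the turning point $m$ so that it simultaneously covers the purely increasing ($m=1$) and purely decreasing ($m=n$) permutations, which the bookkeeping $m-1=|\{i: d_i<0\}|$ handles cleanly.
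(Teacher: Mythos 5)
Your proof is correct, and it takes a genuinely different route from the paper's. The paper argues by contradiction: if $I_k(P)$ were not an interval, there would be rows $i_1<i_2<i_3$ with $\pi_{i_1},\pi_{i_3}\le k<\pi_{i_2}$, which forces an index $s$ with $\pi_{s+1}-\pi_s>0>\pi_{s+2}-\pi_{s+1}$ (a strict rise followed by a strict fall), contradicting the nondecreasing derivative. You instead first establish a stronger global structure statement: since the differences $d_i$ are nondecreasing and nonzero, their sign pattern is $(-,\ldots,-,+,\ldots,+)$, so a convex permutation strictly decreases to its minimum at a position $m$ and then strictly increases; the level set $I_k(P)=\{i:\pi_i\le k\}$ is then a suffix of the decreasing branch glued at $m$ to a prefix of the increasing branch, hence an interval, and $|I_k(P)|=k$ because the values at most $k$ are exactly $1,2,\ldots,k$. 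The two arguments are essentially contrapositives of one another (``no peak'' is equivalent to ``valley-shaped'' once all differences are nonzero), but your version buys more: the explicit unimodal description of convex permutations is a reusable structural fact (it is what underlies the classification in Theorem \ref{thm:convex}), and you make explicit both the cardinality step $|I_k(P)|=k$, which the paper's proof leaves implicit, and the gluing point — $\pi_m=1\le k$ guarantees the suffix and prefix are nonempty and share $m$, so their union really is a single interval.
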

\begin{proof}
 Let $\pi$ be the permutation corresponding to $P$. For $k=1$ the statement is clearly true. So, assume $k \ge 2$ and that $I_k(P)$ is not an interval. Then there are $i_1, i_2, i_3 \in I_k(P)$ with $i_1 < i_2 < i_3$ and the submatrix $P_1$ consisting of the first $k$ columns of $P$ has a 1 in rows $i_1$ and $i_3$, but not in row $i_2$. This clearly implies that there must exist an $s$ such  $\pi_{s+1}-\pi_s >0 > \pi_{s+2}-\pi_{s+1}$. So, the  derivative is not increasing, and $P$ is not convex, a contradiction. Therefore, $I_k(P)$ is  an interval.
\end{proof}

Note that the converse of the implication in Lemma \ref{lem:convex-int} is not true; for instance, consider the permutation matrix
 \[
 P=
 \left[
 \begin{array}{c|c|c|c}
 &&&1\\ \hline
  &&1&\\ \hline
  1&&&\\ \hline
   &1&&
  \end{array}
  \right].
  \]
  Then $I_k(P)$ is an interval of length $k$ for each $k \le n$, but $P$ is not convex.

 Let $P$ be an $n \times n$ subpermutation matrix.
   Let $k\le n$. We say that $P=[p_{ij}]$ is {\em $k$-convex} if (i) each of the first $k$ columns contains exactly one 1,  (ii) $I_k(P)$ is an interval, say equal to $\{r, r+1, \ldots, s\}$, and (iii) $\pi_{i+1}-\pi_i \le \pi_{i+2}-\pi_{i+1}$ for $i=r, r+1, \ldots, s-2$ where $\pi_i$ is the unique column in $P$ containing a 1 in row $i$. Now, let $P$ be such a subpermutation matrix which is $k$-convex and where columns $k+1, \ldots, n$ are all zero. Define the following (possibly empty) set $I^*_k(P)$ of cardinality at most 2:

(i) Let $r-1 \in I^*_k(P)$ if  $r>1$, and the matrix obtained from $P$ by putting a 1 in position $(r-1,k+1)$ is $(k+1)$-convex (this means that the derivative in row $r-1$ is less that the derivative in row $r$); 
  
  (ii) Let $s+1 \in I^*_k(P)$ if  $s<n$, and the matrix obtained from $P$ by putting a 1 in position $(s+1,k+1)$ is $(k+1)$-convex (this means that the derivative in row $s-1$ is less that the derivative in row $s$). 
    
  It follows from Lemma \ref{lem:convex-int} that if $P$ is a convex permutation matrix, then $P$ is also $k$-convex for each $k \le n$. 
  We use this property to construct convex permutation matrices of order $n$ by the following algorithm.

\begin{tabbing}
{\bf Al}\={\bf gorithm 1.} \\

\> 1. \mbox{ } \=Initialize: Let $P=[p_{ij}]=O$. \\
 \>2. \>Choose $i \in \{1, 2, \ldots, n\}$, and let $p_{i1}=1$. \\
 \>3. \>for \=$k=1, 2, \ldots, n-1$ \\
 \>\>\>a. \=If $I^*_k(P)= \emptyset$, stop. \\
 \>\>\> b.\>Otherwise, choose $i \in I^*_k(P)$ and let $p_{i,k+1}=1$. \\
  \>\>end
\end{tabbing}

\medskip
\begin{lemma}
 \label{lem:convex}
 If Algorithm $1$ does not terminate prematurely, i.e., Step $3$a does not occur, then  the resulting permutation matrix is convex. 
Any convex permutation matrix may be constructed by Algorithm $1$. 
\end{lemma}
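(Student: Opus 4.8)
The plan is to prove the two assertions separately, using $k$-convexity as an invariant that Algorithm 1 maintains throughout its execution.

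For the first assertion, I would argue by induction on $k$ that after the 1 in column $k$ has been placed, the current matrix $P$ is $k$-convex. After Step 2 the matrix has a single 1 in column 1, and this is trivially $1$-convex: condition (iii) is vacuous since the interval $I_1(P)$ consists of a single row. For the inductive step, the only way the loop places a 1 in column $k+1$ is by choosing a row $i\in I^*_k(P)$, and by the very definition of $I^*_k(P)$ the resulting matrix is $(k+1)$-convex. Hence, if the algorithm runs to completion (Step 3a never occurs), the final matrix is $n$-convex. It then remains to observe that an $n$-convex matrix is a convex permutation matrix: since each of the $n$ columns contains exactly one 1, there are $n$ ones lying in $n$ distinct rows, so $I_n(P)$ is an interval of length $n$ inside $\{1,\ldots,n\}$, i.e. all of $\{1,\ldots,n\}$. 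Thus every row also contains exactly one 1, so $P$ is a permutation matrix, and condition (iii) of $n$-convexity is precisely the increasing-derivative condition (\ref{eq:convex}).

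For the second assertion, I would show that a given convex permutation matrix $P$ is reproduced by running Algorithm 1 with the choices dictated by $P$ itself. In Step 2, choose the row holding the 1 of $P$ in column 1. Inductively, suppose the partial matrix $P'$ built after $k$ steps agrees with $P$ in its first $k$ columns; I claim the row $t$ holding the 1 of $P$ in column $k+1$ lies in $I^*_k(P')$, so this guided choice is legal and reproduces column $k+1$ of $P$. By Lemma \ref{lem:convex-int}, $I_k(P)=\{r,\ldots,s\}$ and $I_{k+1}(P)$ are intervals of lengths $k$ and $k+1$; since $I_{k+1}(P)=I_k(P)\cup\{t\}$, the new row must satisfy $t=r-1$ or $t=s+1$. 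Moreover, because $P'$ agrees with $P$ on the first $k$ columns and the new 1 sits in the same position as in $P$, the matrix obtained by placing a 1 at $(t,k+1)$ is exactly the submatrix of $P$ formed by its first $k+1$ columns; by the observation (following Lemma \ref{lem:convex-int}) that a convex permutation matrix is $j$-convex for every $j\le n$, this submatrix is $(k+1)$-convex, which is precisely the requirement for $t\in I^*_k(P')$. Hence the algorithm never reaches Step 3a and outputs $P$.

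The main obstacle is the verification in the second part that the guided choice is always available, i.e. that the unique new row $t$ belongs to $I^*_k(P')$. This is where Lemma \ref{lem:convex-int} does the essential work: it forces the new 1 to be adjacent to the current interval, so that $t\in\{r-1,s+1\}$, and the required monotonicity of the derivatives across the extended interval is inherited from the global convexity of $P$, since those derivatives form a consecutive subsequence of the increasing sequence $\pi_2-\pi_1,\ldots,\pi_n-\pi_{n-1}$. Once these two facts are in place, both directions follow routinely from the definitions.
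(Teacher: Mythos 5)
Your proof is correct and follows essentially the same route as the paper: an induction maintaining $k$-convexity as the invariant for the first assertion, and for the second a guided-choice induction showing, via Lemma \ref{lem:convex-int}, that the 1 in column $k+1$ of the target matrix must sit in row $r-1$ or $s+1$ and hence lies in $I^*_k$ of the partial matrix. Your forward direction is in fact spelled out more explicitly than the paper's brief ``it is not hard to see,'' but the underlying argument is the same.
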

\begin{proof}
We may assume $n \ge 2$. Consider Algorithm 1, and let $p_{i1}=1$. For $k=1$, we get 
\[
  I^*_1(P)= \left\{
  \begin{array}{ll}
     \{2\}  & \mbox{\rm if } i=1, \\
    \{i-1,i+1\}  & \mbox{\rm if } 1<i<n, \\
    \{n-1\}  & \mbox{\rm if } i=n.  
  \end{array}
  \right.
\]
Thus, after the step for $k=1$, the resulting matrix $P$ is $2$-convex.
It is not hard to see that the conditions on the set $I^*_k(P)$ assure that, when this set is nonempty for each $k$, the final matrix $P$ constructed will be a permutation matrix with  increasing derivatives and therefore it is convex.

Next, let $Q=[q_{ij}]$ be a convex permutation matrix. We need to show that $Q$ may be constructed by Algorithm 1 by suitable choice of the element in Step 3b in each iteration. Assume that Algorithm 1, after $k$ iterations, has constructed a matrix $P$ whose first $k$ columns coincide with those of $Q$ (for $k=1$ this is clear). Thus, $I_k(P)$ and $I_k(Q)$ are equal, say equal to $\{r, r+1, r+k-1\}$. Moreover, as $I_{k+1}(Q)$ is also an interval, the unique 1 in column $k+1$ of $Q$ must be in row $r-1$ or $r+k$, so assume first it is in row $r-1$. Since $Q$ is convex, $\pi=\pi(Q)$ satisfies $\pi_r-\pi_{r-1} \le \pi_{r+1}-\pi_{r}$ which means that $r-1 \in I^*_k(Q)=I^*_k(P)$, and therefore, in Algorithm 1, we can let column $k+1$ of $P$ have its 1 in  row $r-1$.  A similar construction works when 
the 1 in column $k+1$ of $Q$ is in  row  $r+k$. Thus, in any case, the $k+1$ first columns of $P$ equal the corresponding columns of $Q$. So, by induction, we may obtain $P=Q$ by suitable such choices in  Algorithm 1.
\end{proof}

\begin{theorem}
 \label{thm:convex}
  The set of convex permutations of order $n$ consists of 
  
 $(i)$ $(1, 2, \ldots, n)$ $($identity$)$,
 
 $(ii)$ $(n,1,2, \ldots, n-1)$,
 
 $(iii)$ $(n-1,1,2, \ldots, n-2,n)$,
 
  $(iv)$ $\Pi^*_n$,
 
 \noindent and the permutations obtained by reversing the order in each of these permutations.
 \end{theorem}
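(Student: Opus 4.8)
The plan is to exploit the fact that convexity of $\pi$ means $D(\pi)$ is a nondecreasing sequence of nonzero integers, and to combine this with the constraint that $\{\pi_1,\dots,\pi_n\}=\{1,\dots,n\}$. First I would record that reversal preserves convexity: if $D(\pi)$ is nondecreasing, then so is the derivative of the reversed permutation, whose entries are $-D(\pi)_{n-i}$; this is precisely the symmetry appearing in the statement. Splitting on the sign pattern of $D(\pi)$, the all-positive case forces $\pi=(1,2,\dots,n)$ and the all-negative case forces the anti-identity $(n,\dots,1)$, which is its reversal. In the remaining mixed case the nonzero derivatives change sign exactly once, so $\pi$ strictly decreases and then strictly increases; hence $\pi$ is unimodal with a unique valley, and since every value in $\{1,\dots,n\}$ occurs, the valley value must be $1$. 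Write $\pi_{m+1}=1$.

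Next I would convert the problem into a statement about partitions. Setting $N=n-1$, I associate to the decreasing part the set $A=\{\pi_i-1: i\le m\}$ and to the increasing part the set $B=\{\pi_i-1: i\ge m+2\}$, so that $A$ and $B$ partition $\{1,\dots,N\}$. Monotonicity of $D(\pi)$ translates precisely into both $A$ and $B$ being \emph{convex} in the sense that, listed increasingly, each has nondecreasing consecutive gaps with first element not exceeding the first gap (the decreasing side contributes the magnitudes $|d_m|\le|d_{m-1}|\le\cdots$, the increasing side the values $d_{m+1}\le d_{m+2}\le\cdots$). This correspondence is reversible, so the theorem reduces to enumerating all ordered partitions of $\{1,\dots,N\}$ into two convex sets; the map $(A,B)\mapsto(B,A)$ corresponds to reversal of $\pi$.

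The heart of the argument is the following rigidity. Assuming both parts nonempty, I take (up to the reversal symmetry) $1\in A$ and let $t=\max\{j:\{1,\dots,j\}\subseteq A\}$, so that $\min B=t+1$. Convexity of $B$ forces its second element to be at least $2(t+1)$, whence $\{t+2,\dots,2t+1\}\subseteq A$. Now if $t\ge 2$, then $A$ contains the consecutive pair $t-1,t$ (a gap of $1$) immediately followed by the jump $t\to t+2$ (a gap of $2$); if moreover $t+3\le N$, then $t+3\in A$ produces a gap of $1$ after a gap of $2$, contradicting that the gaps of $A$ are nondecreasing. Thus $t\ge 2$ forces $t\ge N-2$, leaving only $t\in\{1,N-2,N-1\}$. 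I would then read off the three partitions: $t=1$ gives, by the same gap argument applied alternately to $A$ and $B$, the split into odd and even numbers; $t=N-1$ gives $B=\{N\}$; and $t=N-2$ gives $B=\{N-1\}$ and $N\in A$. Finally I would translate these three partitions (and their reversals) back through the correspondence, checking that they are exactly $\Pi^*_n$, the permutation $(n,1,2,\dots,n-1)$, the permutation $(n-1,1,2,\dots,n-2,n)$, together with the identity and anti-identity from the degenerate cases $A=\emptyset$ and $B=\emptyset$, and their reversals.

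The main obstacle is the gap-monotonicity contradiction that eliminates all intermediate values of $t$: everything else is bookkeeping once the unimodal reduction and the convex-set reformulation are in place. A secondary point requiring care is the small-$n$ behaviour, where several of the four listed permutations (and some of their reversals) coincide; the partition analysis still yields the correct set, so I would simply note these coincidences rather than treat them as separate cases.
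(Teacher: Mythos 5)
Your proposal is correct, and it reaches the classification by a genuinely different route than the paper. The paper's proof is algorithmic: it relies on Lemma \ref{lem:convex-int} (the first $k$ columns of a convex matrix occupy an interval of rows) and Lemma \ref{lem:convex} (every convex permutation matrix is produced by Algorithm 1), and then classifies by the length $k$ of the maximal initial staircase $p_{i1}=p_{i+1,2}=\cdots=p_{i+k-1,k}=1$, with $k=n,\,n-1,\,n-2$ giving cases (i)--(iii), the range $3\le k\le n-3$ killed by a local derivative obstruction at the boundary of the interval, and $k=2$ forced to $\Pi^*_n$. You bypass all of that machinery: since $D(\pi)$ is a nondecreasing sequence of nonzero integers, $\pi$ is unimodal with valley value $1$, and your translation into ordered partitions $(A,B)$ of $\{1,\ldots,N\}$, $N=n-1$, into two sets with nondecreasing gaps and least element at most the first gap is exact in both directions (there is no cross-condition between the two sides, since a negative derivative is automatically at most a positive one, and reversal of $\pi$ becomes the swap $(A,B)\mapsto(B,A)$). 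Your rigidity step --- the forbidden gap pattern $1,2,1$ forcing $t\in\{1,\,N-2,\,N-1\}$ --- is essentially the same local obstruction the paper exploits in its Case 4, but your formulation is self-contained and arguably cleaner where the paper is terse: its Case 5 ($k=2$) is dispatched with ``one derives that,'' whereas your alternating forcing at $t=1$ makes the odd/even split, and hence $\Pi^*_n$, transparent. What the paper's route buys in exchange is Algorithm 1 itself, a generation procedure for convex matrices of independent interest. Two points you would still need to write out in full, both routine: the induction behind the $t=1$ alternation (each of $2j+1\in A$ and $2j+2\in B$ is forced in turn by the gap conditions), and the verification that the resulting permutation (descending evens, then $1$, then ascending odds) is exactly the permutation of $\Pi^*_n$, which follows from the definition of $\Pi_k$ and the $90$-degree rotation; your handling of the small-$n$ coincidences among the eight listed permutations is the right call.
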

\begin{proof}
 We consider Algorithm 1, and construct a convex matrix $P=[p_{ij}]$ and corresponding permutation $\pi=\pi(P)$.  By symmetry, we may assume 
 \begin{equation}
  \label{eq:k-eq}
     p_{i1}=p_{i+1,2}=\cdots = p_{i+k-1,k}=1
 \end{equation}
 for some $i$, and with $k\ge 2$ maximal with this property. We discuss different cases.
 
 {\em Case} $1$: $k=n$.  Then $i=1$ and $P=I_n$. 
 
 {\em Case} $2$: $k=n-1$.  Then  $i=2$ and $\pi(P)=(n,1,2, \ldots, n-1)$.
 
 {\em Case} $3$: $k=n-2$. Then it is easy to see that, by convexity, that the only possibility is $i=2$. This gives the permutation $(n-1,1,2, \ldots, n-2,n)$.
 
 {\em Case} $4$: $3 \le k\le n-3$. Then (\ref{eq:k-eq}) holds and $p_{i-1,k+1}=1$. Then $p_{i-2, k+2} =0$, otherwise  $p_{i-2, k+2} =1$ and then $\pi_{i-1}-\pi_{i-2}=k+1-(k+2)=- 1$ and 
  $\pi_{i}-\pi_{i-1}=1-(k+1)=- k\le -2$ which contradicts convexity. By the algorithm, $p_{i-k,k+2}=1$. This, however, by checking the derivatives (at the boundary of the interval), that 
 $I^*_{k+2}= \emptyset$ as $3 \le k\le n-3$. Thus, there is no convex permutation matrix in this case. 
 
  {\em Case} $5$: $k=2$. Then by checking the possible derivatives at the boundary of the interval $I_s(P)$ for each $s$, one derives that $p_{i1}=p_{i+1,2}=p_{i-1,3}=1$ and then $p_{i+1,4}=1$, $p_{i-2,5}=1$ etc. The only possibility is then that $i=\lfloor n/2 \rfloor$ and we obtain the matrix $\Pi^*_n$.
 \end{proof}

 \begin{example}
 {\rm 
   The convex permutation matrices of order $n=6$ are the following 4 matrices 
   \[
   \left[
     \begin{array}{c|c|c|c|c|c}
       1&&&&& \\ \hline 
       &1&&&& \\ \hline 
       &&1&&& \\ \hline 
       &&&1&& \\ \hline 
       &&&&1& \\  \hline 
       &&&&&1      
     \end{array}
   \right], \; \;\;
   \left[
     \begin{array}{c|c|c|c|c|c}
       &&&&&1 \\ \hline 
       1&&&&& \\ \hline 
       &1&&&& \\ \hline 
       &&1&&& \\ \hline 
       &&&1&& \\  \hline 
       &&&&1&      
     \end{array}
   \right],  
   \]
  and  
    \[
   \left[
     \begin{array}{c|c|c|c|c|c}
       &&&&1& \\ \hline 
       1&&&&& \\ \hline 
       &1&&&& \\ \hline 
       &&1&&& \\ \hline 
       &&&1&& \\  \hline 
       &&&&&1      
     \end{array}
   \right], \;\;\;
   \Pi^*_6=\left[
     \begin{array}{c|c|c|c|c|c}
       &&&&1& \\ \hline 
       &&1&&& \\ \hline 
       1&&&&& \\ \hline 
       &1&&&& \\ \hline 
       &&&1&& \\  \hline 
       &&&&&1      
     \end{array}
   \right].
   \]
   and those additional 4 obtained by reordering rows in the opposite order.  
 }
\end{example} \endproof

 \section{Coda}
 \label{sec:coda}
 
 In this concluding section, we discuss 1-Costas permutations and some additional properties of permutations  involving their  derivatives.
 
 For permutations one may consider properties similar to  Lipschitz properties of  functions defined on the real line. 
 We say that a permutation $\pi$, and  the corresponding permutation matrix $P_{\pi}$,  is {\em $L$-Lipschitz} if 
\begin{equation}
 \label{eq:Lipschitz}
   | \pi_i -\pi_j | \le L|i-j| \;\;\;(i,j \le n).
\end{equation}
Since we only consider permutations, the only  values of interest here  are $L=1, 2, \ldots, n-1$. It is easy to verify that (\ref{eq:Lipschitz}) is equivalent to to the simplified condition that 
$ | \pi_{i+1} - \pi_i| \le L$ ($i < n$), or, equivalently, $\max_i |D(\pi)_i| \le L$. The only permutations that are $1$-Lipschitz are the identity and the anti-identity permutations. An interesting question is to characterize permutations that are $L$-Lipschitz, for a given $L$. We believe this can at least  be done for $L=2$. 

%
 %


  Checking if a given permutation of order $n$ has the $1$-Costas property is easily done: compute all the $(n-1)$ derivatives, requiring $(n-1)$ arithmetic operations, and then sort these number ($O(n \log n)$ operations suffice).

Every $n\times n$ permutation matrix $P$  with the $1$-Costas property may be constructed, starting with the zero matrix, by a simple algorithm which, however, may result in failure:

\begin{enumerate}
 \item Place a 1 in some position in the first row.
 \item for $i=2,3, \ldots, n$, 
 \begin{itemize}
    \item[(a)] determine the permitted positions in row $i$, i.e.,  those positions $(i,j)$ that are (i) not in a column already occupied by a 1, and (ii) not in any $3$-line in rows $\le i$ with two 1's, and (iii) not in any parallelogram in rows $\le i$ with three 1's,
   \item[(b)] choose, if possible, a permitted position in row $i$  and place a 1 there.
   \end{itemize}
\end{enumerate}
	If the algorithm does not stop before $n$ ones have been placed, the resulting matrix is a permutation matrix satisfying the $1$-Costas property. 		
	

Since the derivative of  an $L$-Lipschitz permutation  of order $n$ can have at most $2L$ values, 
an  $L$-Lipschitz permutations cannot have the  $1$-Costas property if $n$ is large enough.

%

The  table in Figure 1 below gives the values of $C_n^{(1)}$ for $n\le 10$. In the table, $F_n^{(1)}=100 \cdot C_n^{(1)}/n!$ is the fraction of the permutation matrices that have the 1-Costas property. 
A table of the number of Costas permutations of order $n$ for $n\le 29$  can be found   in \cite{SCH}.
	
\begin{figure}[ht]	
\begin{center}
\begin{tabular}{|r|r|r|c|} \hline
  $n$ & $n!$ & $C_n^{(1)}$ & $F_n^{(1)}$   \\ \hline
   1 & 1 & 1 & 100.0  \\ \hline
   2 & 2 & 2 & 100.0  \\ \hline
   3 & 6 & 4 & 66.7  \\ \hline
   4 & 24 & 12 & 50.0  \\ \hline
   5 & 120 & 44 & 36.7  \\ \hline
   6 & 720 & 176 & 24.4  \\ \hline
   7 & 5040 & 788 & 15.6  \\ \hline
   8 & 40320 & 3936 & 9.8  \\ \hline
   9 & 362880 & 23264 & 6.4  \\ \hline
   10 & 3628800 & 152112 & 4.2  \\ \hline
  \end{tabular}	
  \end{center}
  \caption{Permutations and the $1$-Costas property.}
\end{figure}	
	
\begin{question} Is $F_n^{(1)}$ a decreasing function of $n$? It is likely that  $\lim_{n\rightarrow \infty} F_n^{(1)}=0$.
  \end{question}

A  {\it centrosymmetric permutation} of order $n$ is a permutation
$\pi=(i_1,i_2,\ldots,i_n)$ such that $i_k+i_{n+1-k}=n+1$ for $k=1,2,\ldots,n$. The corresponding $n\times n$ {\it centrosymmetric permutation matrix} is characterized by the property that it is invariant under a 180 degree rotation. 
%
%
The permutation in Example \ref{ex:first} is  centrosymmetric and has a palindromic discrete derivative.

\begin{example}\label{ex:centro}{\rm
		Let $n=8$ and $\pi=(2,3,5,8,1,4,6,7)$ with corresponding permutation matrix
		\[P_{\pi}=
\left[\begin{array}{c|c|c|c|c|c|c|c}
&1&&&&&&\\ \hline
&&1&&&&&\\ \hline
&&&&1&&&\\ \hline
&&&&&&&1\\ \hline
1&&&&&&&\\ \hline
&&&1&&&&\\ \hline
&&&&&1&&\\ \hline
&&&&&&1&\end{array}\right].\]
Then $\pi$ and $P_{\pi} $ are centrosymmetric. The difference triangle is
\[\begin{array}{rrrrrrrrrrrrrrr}
2&&3&&5&&8&&1&&4&&6&&7\\ 
&1&&2&&3&&-7&&\cellcolor{blue!25}3&&\cellcolor{blue!25}2&&\cellcolor{blue!25}1&\\ 
&&3&&5&&-4&&\cellcolor{blue!25}-4&&\cellcolor{blue!25}5&&\cellcolor{blue!25}3&&\\ 
&&&6&&-2&&-1&&\cellcolor{blue!25}-2&&\cellcolor{blue!25}6&&&\\ 
&&&&-1&&1&&\cellcolor{blue!25}1&&\cellcolor{blue!25}-1&&&&\\ 
&&&&&2&&3&&\cellcolor{blue!25}2&&&&&\\ 
&&&&&&4&&\cellcolor{blue!25}4&&&&&&\\ 
&&&&&&&5&&&&&&&
\end{array}.\]
Thus except for those repeats (highlighted) which follow from		 the centrosymmetric property, the difference triangle has no other repeats in its rows.
Denoting $\pi$ as $(i_1,i_2, i_3, i_4,i_5,i_6,i_7,i_8)$, the entries in the center of the difference triangle are $9-2i_4, 9-2i_3,9-2i_2,9-2i_1$.}
\hfill{$\Box$}\end{example}

We define a {\it Costas-centrosymmetric permutation} to be a centrosymmetric permutation $(i_1,i_2,\ldots,i_n)$  whose difference table has no repeats other than those required by the centrosymmetric property. Note that the required repeats are of the form
$a-b=(n+1-b)-(n+1-a)$. Equivalently, we consider only the differences $\pi_j-\pi_i$   where  $1\le i < j\le n+1-i$. 
Example \ref{ex:centro} is a Costas-centrosymmetric permutation. 

The  Costas  permutation $(1,3,9,10,13,5,15,11,16,14,8,7,4,12,2,6)$ of order $n=16$ appears in \cite{Drakakisslides}.
Reversing the second half of this permutation gives the  Costas-centrosymmetric  permutation
$(1,3,9,10,13,5,15,11,6,2,12,4,7,8,14,16)$. In \cite{Drakakisslides} its ``anti-reflective symmetry'' is noted.

If we take the Costa-centrosymmetric permutation $(2,4,3,1,8,6,5,7)$ of order 8
with difference triangle
\[\begin{array}{rrrrrrrrrrrrrrr}
2&&4&&3&&1&&8&&6&&5&&7\\ 
&2&&-1&&-2&&7&&-2&&-1&&2&\\ 
&&1&&-3&&5&&5&&-3&&1&&\\ 
&&&-1&&4&&3&&4&&-1&&&\\ 
&&&&6&&2&&2&&6&&&&\\ 
&&&&&4&&1&&4&&&&&\\ 
&&&&&&3&&3&&&&&&\\ 
&&&&&&&5&&&&&&&
\end{array}\]
and reverse the last half to get $(2,4,3,1,7,5,6,8)$,
we do not get a  Costas permutation  since row 1 of its difference triangle already gives a repeat:
\[\begin{array}{rrrrrrrrrrrrrrr}
2&&4&&3&&1&&7&&5&&6&&8\\ 
&2&&1&&-2&&6&&-2&&1&&2&\\ 
&&1&&-3&&4&&4&&-1&&3&&
\end{array}.\]
This leads to the following question:
\begin{question}\label{qu:reverse}{\rm 
  Let $n=2m$ where $m$ is an even integer. Define a {\it Costas half- permutation} of order $m$ to be a sequence $(a_1,a_2,\ldots,a_m)$  such that $\{a_1,a_2,\ldots,a_m\}$ consists of one integer from each of the pairs $\{i,2m+1-i\}$ for $i=1,2,\ldots,m/2$ and each of the rows of its corresponding difference triangle does not have any repeats? $($Note that to construct a Costas half-permutation of order $m$ one has to choose an integer in each pair $\{i,2m-i\}$ and then order them in some way.$)$}
  \hfill{$\Box$}
  \end{question}

  \begin{example}\label{ex:half}{\rm
Let $m=6$. Then choosing one integer from each of the pairs $\{1,12\}, \{2,11\},\{3,10\},\{4,9\},\{5,8\}, \{6,7\}$, namely, $1,2,10,9,8,7$, we get a Costas half-permutation:
\[\begin{array}{rrrrrrrrrrr}
1&&8&&10&&9&&2&&7\\ 
&7&&2&&-1&&-7&&5&\\ 
&&9&&1&&-8&&-2&&\\ 
&&&8&&-6&&-3&&&\\ 
&&&&1&&-1&&&&\\ 
&&&&&6&&&&&\end{array}.\]
 }\hfill{$\Box$}
\end{example}
  
  A more general question is:
  \begin{question}\label{qu:subcostas}{\rm 
   Let  $m$ and $n$ be positive integers with $m\le n$. Define a {\it Costas $m$-subpermutation of order $n$} to be a sequence
  $a_1,a_2,\ldots,a_m$ of distinct integers taken from $\{1,2,\ldots,n\}$  whose  difference table does not contain a repeat in any row. For each positive integer $n$, let $\gamma_n$ be the largest integer $m$ such that 
  there is a Costas $m$-subpermutation of order $n$. Thus $\gamma_n\le n$ with equality if and only if there exists a Costas permutation of order $n$.
  Investigate this parameter $\gamma_n$. Find the best constant $c$ such that $\gamma_n\ge cn$ for all $n$.	  Clearly, if there exists a Costas permutation of order $m$, then for all $n\ge m$, $\gamma_n\ge m$.}
  \hfill{$\Box$}
  \end{question}
  
  Rather than applying the Costas property to a sequence of $m$ integers taken from $\{1,2,\ldots,n\}$, one can attach signs to a permutation.
   Let $\pi$ be a {\it signed permutation} of order $n$, that is,
$\pi=(i_i,i_2,\ldots,i_n)$ where $|\pi|=(|i_1|,|i_2|,\ldots,|i_n|)$ is a permutation of $\{1,2,\ldots,n\}$. Then $\pi$ is a {\it Costas-signed permutation}
provided the difference triangle of $ (i_i,i_2,\ldots,i_n)$ (note: not the difference triangle of $|\pi|$) does not have any repeats in its rows.
Allowing negative signs  makes it easier to satisfy the  Costas property of no repeats in a row.

\begin{example} {\it \rm
Let $n=4$ and $\pi=(2,4,-1,-3)$.
Then the difference triangle is
\[\begin{array}{rrrrrrr}
2&&4&&-1&&-3\\ 
&2&&-5&&-2&\\ 
&&-3&&-7&&\\ 
&&&-5&&&
\end{array},
\]	
so that this is a  Costas-signed permutation of order $4$.
}\hfill{$\Box$}
\end{example}

\begin{question}\label{qu:signedCostas}{\rm 
Does there exists a Costas-signed permutation of order $n$ for every positive integer $n$?
For each integer $n\ge 1$,  find a construction for a Costas-signed permutation of order $n$.}
\hfill{$\Box$}\end{question}

Finally we note that higher dimensional Costas permutations have been investigated; see e.g. \cite{JedwabLin} and the references therein.

\end{document}